\documentclass{amsproc}

 \newtheorem{theorem}{Theorem}[section]
 
 \newtheorem{lemma}[theorem]{Lemma}
 
 \newtheorem{proposition}[theorem]{Proposition}
 \theoremstyle{definition}
 \newtheorem{definition}[theorem]{Definition}
 \theoremstyle{remark}
 \newtheorem{remark}[theorem]{Remark}
 \newtheorem*{example}{Example}

\numberwithin{equation}{section}



\usepackage{color}
\usepackage{graphics}
\usepackage{graphicx}
\usepackage{epsfig}
\usepackage{amssymb}
\usepackage{verbatim}
\usepackage{pb-diagram}
\usepackage{graphics,amsmath,amsfonts,amssymb} 
\usepackage{epsfig,verbatim,multicol}

\begin{document}
\title{Pasting and Reversing Approach to Matrix Theory}
\author[P. Acosta-Hum\'anez]{Primitivo B. Acosta-Hum\'anez}

\address[P. Acosta-Hum\'anez]{School of Basic and Biomedical Sciences - Universidad Sim\'on Bol\'{\i}var, Barranquilla, Colombia}
\email{primitivo.acosta@unisimonbolivar.edu.co -- primi@intelectual.co}
\author[A. Chuquen]{Adriana L. Chuquen}
\address[A. Chuquen]{Master Program in Mathematics, Universidad del Norte, Barranquilla - Colombia}
\email{achuquen@uninorte.edu.co}
\maketitle

\begin{abstract}
The aim of this paper is to study some aspects of matrix theory through Pasting and Reversing. We start giving a summary of previous results concerning to Pasting and Reversing over vectors and matrices, after we rewrite such properties of Pasting and Reversing in matrix theory using linear mappings to finish with new properties and new sets in matrix theory involving Pasting and Reversing. In  particular we introduce new linear mappings: Palindromicing and Antipalindromicing mappings, which allow us to obtain palindromic and antipalindromic vectors and matrices.\\

\noindent \textit{Keywords and Phrases}. Antipalindromic matrix, Antipalindromic vector, Antipalindromicing mapping, linear mapping, matrix theory, Palindromic matrix, Palindromic vector, Palindromicing mapping, Pasting, Reversing.\\

\noindent \textbf{MSC 2010}. Primary: 15A16. Secondary: 05E99, 15A04
\end{abstract}

\section*{Introduction}
Pasting and Reversing as mathematical operations, introduced by the first author in \cite{Ac2} and after in \cite{Ac1}, have been studied in different contexts such as natural numbers, polynomials, matrices, vectors, differential operators, permutations and generalized vector product. In the papers \cite{acchro1,acchro2,acmoro} the authors studied these operations in the framework of integer numbers, rings and vector spaces. Following this approach the first author generalized some results, concerning to the product of matrices, of \cite{acchro2} in \cite{acarnu}. Finally, some applications of combinatorial dynamics were done using Pasting and Reversing over simple permutations to obtain genealogies of continuous maps with minimal dynamics, see \cite{Ac1,AM, AM1,AM2}.\\

In this paper, devoted to matrix theory, we follow the same philosophy of the previous ones concerning to Pasting and Reversing. We obtain new results in the framework of elementary matrix theory which arise from links with Pasting and Reversing. In particular we study Pasting and Reversing from linear mappings, proving some interesting results in matrix theory and introducing new linear mappings such as Palindromicing and Antipalindromicing mappings for vector and matrices, among others. For a complete theoretical background in matrix theory see \cite{Br,Ev,Ga,lantis,Le, We}.\\

The paper is organized as follows: Section 1 contains a summary of the properties of Pasting and Reversing related with basic linear algebra following the references \cite{acarnu,acchro1,acchro2}. Section 2 contains the main results of the paper, which in particular allow us the rewriting of some results presented in Section 1, as well to obtain some new results related with Pasting, Reversing and analytic functions over matrices. 

\section{Preliminaries}
We start considering the vector space $V=\mathbb{K}^n$, where $\mathbb{K}$ is a field of characteristic zero, and we write $W\leq V$ to say that $W$ is a subspace of $V$. In this way, $\mathcal M_{n\times m}(\mathbb{K})$ denotes the set of $n\times m$ matrices with elements belonging to $\mathbb{K}$. We should keep in mind that when we write $\mathcal M_{n\times m}$ we means $\mathcal M_{n\times m}(\mathbb{K})$. Finally, \emph{floor} and \emph{ceiling}  functions, denoted by $\lfloor\,\,\rfloor$ and $\lceil\,\,\rceil$ respectively, are defined as
$$\lfloor x\rfloor =\max\{m\in \mathbb {Z} \mid m\leq x\},\quad 
 \lceil x\rceil =\min\{n\in \mathbb {Z} \mid n\geq x\}.$$

The following definitions and properties can be found in \cite{acarnu,acchro1,acchro2}.

Suppose $v=(v_{1},v_{2},\ldots,v_{n}) \in \mathbb{K}^n$, \emph{Reversing} of $v$, denoted by $\widetilde v$ is $$\widetilde{v}=(v_{n},v_{n-1},\ldots,v_{1}).$$ Thus, the following statements hold:
$$ \begin{array}{llll}
1.&\tilde{\tilde{v}}=v&2.&\widetilde{av+bw}=a\widetilde{v}+b\widetilde{w},\,\,a,b\in
\mathbb{K},\,v,w\in V\\
3.&v\cdot w=\widetilde{v}\cdot\widetilde{w}&4.&\widetilde{(v\times w)}=\widetilde{w}\times\widetilde{v},\,\,\forall v,w\in \mathbb{K}^{3}
\end{array}$$

Whenever $\widetilde{v}=v,$ we say that $v$ is a palindromic vector. Similarly, $\widetilde{w}=-w$ means that $w$ is an antipalindromic vector. We denote by $W_p\subset V$ and $W_a\subset V$ the sets of palindromic and antipalindromic vectors respectively, therefore we have the following statements:
$$ \begin{array}{llll}
5.& v+w\in W_p,\,\forall v,w\in W_p & 6.&v+w\in W_a,\,\forall v,w\in W_a\\
7.&v\times w\in W_a,\,\forall v,w\in W_p&8.&v\times w=(0,0,0),\,\forall v,w\in W_a\\
9.&v\times w\in W_p,\,\forall v,\in W_a, \, w\in W_p&10.&v\times w\in W_p,\,\forall v\in W_p, \, w\in W_a
\end{array}$$

We recall that for $n=3$, statements 3 to 6 were proven in \cite{acchro2}, while in \cite{acarnu} were proven for the general case.\\

Consider $v=(v_1,\ldots,v_n)$ and $w=(w_1,\ldots,w_n)$, then Pasting of $v$ with $w$, denoted by $v\diamond w$, is $$v\diamond w=(v_{1},\ldots,v_{n},w_{1},\ldots,w_{m}),$$ therefore $V\diamond W:=\{v\diamond w:\, v\in V, w\in W\}$ and the following
statements hold.
$$\begin{array}{llll}
11.&V \diamond W\cong \mathbb{K}^{n+m}&12.&\dim(V\diamond W) =\dim V+\dim W\\
13.& \widetilde{v\diamond w}=\tilde{w}\diamond \tilde{v}&14.&(v\diamond w)\diamond z=v\diamond (w\diamond z)\\
15.&W_{p}\leq V&16.&\dim W_{p}=\lceil\frac{n}{2}\rceil\\
17.&W_{a} \leq V &18.&\dim W_{a}=\lfloor\frac{n}{2}\rfloor\\
19.&V=W_p\oplus W_a&20.&v=w_p+w_a,\forall v\in V
\end{array}
$$
Due to $(\mathbb{K}_n[x],+,\cdot)\cong (\mathbb{K}^{n+1},+,\cdot)$, we apply Pasting and Reversing over the polynomials to recover some results given in \cite{acchro1,MaRa}.\\

Considering $(\mathcal M_{n\times m},+,\cdot)\cong (\mathbb{K}^{nm},+,\cdot)$, we present two different approaches for Pasting and Reversing to matrices. The first ones is concerning to rows and columns.

Assume $A\in\mathcal M_{n\times m}$, Reversing by rows of $A$ and Reversing by columns of $A$ are given by
\begin{displaymath}
\widetilde{A}_r=\begin{pmatrix}\widetilde{v_{1}}\\ \widetilde{v_{2}}\\ \vdots\\ \widetilde{v_{n}}\end{pmatrix},\quad\widetilde{A}_c=\begin{pmatrix}\widetilde{c_{1}}& \widetilde{c_{2}}& \cdots&\widetilde{c_{m}}\end{pmatrix}.\end{displaymath}
Now, we can assume $A\in \mathcal{M}_{n\times m}(\mathbb{K})$, $B\in\mathcal M_{q\times m}(\mathbb{K})$ and $C\in\mathcal M_{n\times p}(\mathbb{K})$ given as follows. Pasting by rows of $A$ with $C$ and Pasting by columns of $A$ with $B$ are given by
 \begin{displaymath} A\diamond_r C=\begin{pmatrix}z_{1}\\z_{2}\\ \vdots\\z_{n}\end{pmatrix},\quad z_i=v_i\diamond w_i,\quad  A\diamond_c B=\begin{pmatrix}y_{1}&y_{2}& \cdots&y_{n}\end{pmatrix},\quad y_i^T=f_i^T\diamond g_i^T,\end{displaymath}
where $v$ and $w$ are row vectors of $A$ and $B$ respectively, while $f$ and $g$ are column vectors of $C$ and $D$ respectively. Thus the following statements hold.
$$\begin{array}{llll}
21.&\widetilde{\widetilde{A}}_r=A&22.&\widetilde{\widetilde{ A}}_c=A\\
23.&\widetilde{(A\diamond_r B)}_r=(\widetilde {B}_r)\diamond_r (\widetilde{A}_r)&24.&\widetilde{(A\diamond_c B)}_c=(\widetilde{B}_c)\diamond_c (\widetilde{A}_c)\\
25.&(A\diamond_r B)\diamond_r C=A\diamond_r (B\diamond_r C)&26.&(A\diamond_c B)\diamond_c C=A\diamond_c (B\diamond_c C)\\
27.&\widetilde{(\alpha A+\beta B)}_r=\alpha\widetilde{ A}_r+\beta \widetilde{B}_r&28.&\widetilde{(\alpha A+\beta B)}_c=\alpha \widetilde{A}_c+\beta\widetilde{B}_c\\
29.&\mathcal M_{n\times m}\diamond_r \mathcal M_{n\times p}= \mathcal M_{n\times(m+p)}&30.&\mathcal M_{n\times m} \diamond_c \mathcal M_{l\times m}= \mathcal M_{(n+l)\times m}\\
\end{array}$$
The following properties can allow us the writing of Pasting in an algorithmic way
$$\begin{array}{ll}
31.&A\diamond_rB=A((I_n\diamond_c\mathbf{0}_{(n-m)\times m})\diamond_r \mathbf{0}_{n\times p})+\mathbf{0}_{n\times m}\diamond_r B\\
32.&A\diamond_cB=A((I_n\diamond_r\mathbf{0}_{n\times (m-q)})\diamond_c \mathbf{0}_{n\times p}))+\mathbf{0}_{n\times m}\diamond_c B\\
\end{array}$$

We denote by $W_p^r(n\times m)\subset \mathcal M_{n\times m}$, $W_p^c(n\times m)\subset \mathcal M_{n\times m}$, $W_a^r(n\times m)\subset\mathcal M_{n\times m}$ and $W_a^c(n\times m)\subset\mathcal M_{n\times m}$ the sets of palindromic matrices by rows, palindromic matrices by columns, antipalindromic matrices by rows and antipalindromic matrices by columns respectively. In some cases, for pedagogical purposes, we denote them by $W_p^r$, $W_p^c$, $W_a^r$ and $W_a^c$. Therefore we have the following statements:
$$\begin{array}{llll}
33.&W^r_p\leq\mathcal M_{n\times m}&34.&W^c_p\leq\mathcal M_{n\times m}\\
35.&\dim W^r_p=n\left\lceil \frac{m}{2}\right\rceil&36.&\dim W^c_p=m\left\lceil \frac{n}{2}\right\rceil\\
37.&W^r_a\leq\mathcal M_{n\times m}&38.&W^c_a\leq\mathcal M_{n\times m}\\
39.&\dim W^r_a=n\left\lfloor \frac{m}{2}\right\rfloor&40.&\dim W^c_a=m\left\lfloor \frac{n}{2}\right\rfloor\\
41.&\mathcal M_{n\times m}=W^c_p\oplus W^c_a&42.&\mathcal M_{n\times m}=W^r_p\oplus W^r_a\\
\end{array}$$

Some properties derived from Pasting and Reversing by rows and columns with respect to classical matrices operations are:
$$\begin{array}{llll}
43.&(\widetilde{A}_r)^T=\widetilde{(A^T)}_c&44.&(\widetilde{ A}_c)^T=\widetilde{(A^T)}_r\\
45.&(A\diamond_c B)^T=A^T\diamond_r B^T&46.&(A\diamond_r B)^T=A^T\diamond_c B^T\\
47.&\widetilde{(AB)}_r=A(\widetilde{B}_r)&48.&\widetilde{(AB)}_c=(\widetilde{A}_c)B\\
49.&\det(\widetilde{A}_c)=(-1)^{\left\lfloor\frac{n}{2}\right\rfloor}\det A&50.&\det(\widetilde{A}_r)=(-1)^{\left\lfloor\frac{n}{2}\right\rfloor}\det A\\
51.&(\widetilde{A}_c)^{-1}=\widetilde{(A^{-1})}_r&52.&(\widetilde{A}_r)^{-1}=\widetilde{(A^{-1})}_c\\
53.&A=A^r_p+A^r_a,\,\forall A\in\mathcal M_{n\times m} &54.&A=A^c_p+A^c_a,\,\forall A\in\mathcal M_{n\times m}\\
\end{array}$$
Some properties derived from palindromic and antipalindromic matrices, with respect to classical matrix theory, are:
$$\begin{array}{llll}
55.&A+B\in W^r_p,\,\forall A,B\in W^r_p&56.&A+B\in W^r_a,\,\forall A,B\in W^r_a\\
57.&A+B\in W^c_p,\,\forall A,B\in W^c_p&58.&A+B\in W^c_a,\,\forall A,B\in W^c_a\\
59.&\forall A,B\in W^r_p,\,AB\in W^r_p&60.&\forall A,B\in W^r_a,\,AB\in W^r_a\\
61.&\forall A,B\in W^c_p,\,AB\in W^c_p&62.&\forall A,B\in W^c_a,\,AB\in W^c_a\\
63.&AB\in W^r_p\Leftrightarrow B\in W^r_p&64.&AB\in W^r_a\Leftrightarrow B\in W^r_a\\
65.&AB\in W^c_p\Leftrightarrow A\in W^c_p&66.&AB\in W^c_a\Leftrightarrow A\in W^c_a\\
67.&AB\neq \mathbf{0}\in W^r_p\Leftrightarrow B\in W^r_p&68.&AB\neq \mathbf{0}\in W^c_p\Leftrightarrow A\in W^c_p\\
69.&AB\neq \mathbf{0}\in W^r_a\Leftrightarrow B\in W^r_a&70.&AB\neq \mathbf{0}\in W^c_a\Leftrightarrow A\in W^c_a\\
\end{array}$$
Now, introducing the sets \emph{double palindromic matrices} $\mathsf{W}_{pp}:=W_p^r\cap W_p^c$, \emph{double antipalindromic matrices} $\mathsf{W}_{aa}:=W_a^r\cap W_a^c$ and $\mathsf{W}_{pa}:=W_p^r\cap W_a^c,\, \mathsf{W}_{ap}:=W_a^r\cap W_p^c,$ therefore the following statements hold.
$$\begin{array}{llll}
71.&\mathsf{W}_{pp}\leq\mathcal M_{n\times m}& 72.&\mathsf{W}_{pa}\leq\mathcal M_{n\times m}\\
73.&\mathsf{W}_{ap}\leq\mathcal M_{n\times m}&
74.&\mathsf{W}_{aa}\leq\mathcal M_{n\times m}\\
75.&\dim \mathsf W_{pp}=\left\lceil \frac{n}{2}\right\rceil\left\lceil \frac{m}{2}\right\rceil&
\smallskip

76.&\dim \mathsf W_{pa}=\left\lceil \frac{n}{2}\right\rceil\left\lfloor \frac{m}{2}\right\rfloor\\
77.&\dim \mathsf W_{ap}=\left\lfloor \frac{n}{2}\right\rfloor\left\lceil \frac{m}{2}\right\rceil&
78.&\dim \mathsf W_{aa}=\left\lfloor \frac{n}{2}\right\rfloor\left\lfloor \frac{m}{2}\right\rfloor\\
79.&\mathcal W=\mathsf W_{pp}\oplus \mathsf W_{pa}\oplus\mathsf W_{ap}\oplus \mathsf W_{aa}&
80.&\forall A\in\mathcal W,\,A=A_{pp}+A_{pa}+A_{ap}+A_{aa}
\end{array}$$

In \cite[\S 3]{acarnu} can be found a relationship between a generalized vector product of $n-1$ vectors of $\mathbb{K}^n$ and Reversing. Assume $v_{i}\in\mathbb K^n$, the generalized vector product of $v_i$ is given by
$$\bigwedge_{i=1}^{n}  (v_{i})=\sum_{k=1}^{n}\left(
-1\right) ^{1+k}\det \left( M^{(k)}\right) e_{k},
$$
Where the matrix $M^{\left( k\right) }\in \mathcal M_{(n-1)\times(n-1)}$ and is given by
$$
M^{(k)}=\left\{
\begin{array}{l}
\left(m_{i,j}\right)\text{, whether }j<k \\
\left(m_{i,j+1}\right)\text{, whether }j\geq k%
\end{array}
\right.
$$
Now, consider $M\in\mathcal M_{\left(
n-1\right) \times n}$ and $v_i\in \mathbb K^n$ where $1\leq i \leq n-1$, then we can obtain the following properties
$$\begin{array}{ll}
81.&\widetilde{(M^{\left( k\right) })}_r=M^{\left( n-k+1\right) }\widetilde I_{n-1}, 1\leq k\leq n\\
82.&\displaystyle \bigwedge_{i=1}^{n-1} (\widetilde{v_i}_r)=(-1)^{\left\lceil\frac{3n}{2}\right\rceil}\widetilde{\left(\bigwedge_{i=1}^{n-1} (v_{i})\right)}_r\\
 \end{array}$$

Now, considering the matrices $A\in \mathcal M_{n\times m}$ and $B\in \mathcal M_{p\times q}$ as vectors,  we use $\widehat{A}$ to denote Reversing of $A$, that is  $$\widehat{A}=A\widehat I_{nm}.$$ Also for $n=p$ or $m=q$ (exclusively) we denote by $A\diamond B$ Pasting of $A$ with $B$, which is given by $$A\diamond B=(v_{11},\ldots,v_{1m},\ldots,v_{n1},\ldots v_{nm},w_{11},\ldots,w_{1q},\ldots,w_{p1},\ldots, w_{pq}).$$ Therefore, we return to recover the expressions for $\widehat{A}$ and $A\diamond B$ in term of matrices instead of vectors, i.e.,
$$\widehat{A}=\begin{pmatrix}
v_{nm}&\ldots&v_{n1}\\\vdots\\v_{1m}&\ldots& v_{11}
\end{pmatrix},\, A\diamond B=\left\lbrace\begin{array}{l}
\begin{pmatrix}
v_{11}&\ldots&v_{1m}&w_{11}&\ldots&w_{1q}\\ \vdots\\v_{n1}&\ldots& v_{nm}&w_{p1}&\ldots&w_{pq}
\end{pmatrix},\, \begin{matrix}n=p\\m\neq q\end{matrix} \\ \\\begin{pmatrix}
v_{11}&\ldots&v_{1m}\\ \vdots\\v_{n1}&\ldots& v_{nm}\\w_{11}&\ldots&w_{1p}\\ \vdots\\w_{p1}&\ldots&w_{pq}
\end{pmatrix},\,\begin{matrix}n\neq p\\m= q\end{matrix}\end{array}\right.$$ We say that any matrix $P$, is a palindromic matrix whether $\widehat{P}=P$. In the same way, we say that any matrix $A$, is an antipalindromic matrix whether $\widehat{A}=-A$.
Note that $\widehat{(I_n)}=I_n\widehat{I}_{n^2}$. From now on, the sets of palindromic and antipalindromic matrices are denoted by $\mathbf{PA}(n\times m,\mathbb{K})$ and $\mathbf{aPA}(n\times m,\mathbb{K})$ respectively. Moreover, we write $\mathbf{PA}(n,\mathbb{K})$ and $\mathbf{aPA}(n,\mathbb{K})$ whether $n=m$. In some cases, for pedagogical purposes, we write the Set of Palindromic Matrices as $\mathbf{PA}(n\times m)$ and $\mathbf{PA}$. Similarly, also we write the Set of Antipalindromic Matrices as $\mathbf{aPA}(n\times m$ and $\mathbf{aPA}$. Thus, we arrive to the following elementary results for $A \in \mathcal M_{n\times m}$, $B \in \mathcal M_{p\times q}$, $C \in \mathcal M_{r\times s}$, $b,c\in \mathbb{K}$, $A_a\in \mathbf{aPA}$ and $A_p\in \mathbf{PA}$.
$$\begin{array}{llll}
83.&\widehat{A}=(\widetilde{v_n},\cdots,\widetilde{v_1})&84.&\widehat{\widehat{A}}=A\\
85.&\widehat{(A\diamond B)}=\widehat{B}\diamond \widehat{A}&86.&(A\diamond B)\diamond C=A\diamond (B\diamond C)\\
87.&p=n,\, q=m,\,\widehat{(bA+cB)}=b\widehat{A}+c\widehat{B}&88.&A\diamond B=\mathcal M_{r\times s}(\mathbb{K})\\
89.&\mathbf{PA}\leq\mathcal M_{n\times m}&90.&\mathbf{aPA}\leq\mathcal M_{n\times m}\\
91.&\dim \mathbf{PA}=\left\lceil \frac{nm}{2}\right\rceil&92.&\dim \mathbf{aPA}=\left\lfloor \frac{nm}{2}\right\rfloor\\
93.&A+B\in \mathbf{PA},\,A,B\in \mathbf{PA}&94.&A+B\in \mathbf{aPA},\,A,B\in \mathbf{aPA}\\
95.&\forall A\in\mathcal M_{n\times m},A=A_p+A_a&96.& \mathcal M_{n\times m}=\mathbf{PA}\oplus \mathbf{PA}.
\end{array}$$

Also, we can see some links between classical matrix theory and Reversing in the following results.

$$\begin{array}{llll}
97.&\widehat{I_n}=I_n&98.& \widehat{A}=\widetilde{\left(\widetilde{A}_r\right)}_c\\ 99.&\widehat{A}=\widetilde{\left(\widetilde{A}_c\right)}_r&100.&\widehat{(AB)}=\widehat{(A)}\widehat{(B)}\\
101.&(\widehat{A})^{-1}=\widehat{(A^{-1})}&102.&\det (\widehat{A})=\det A\\
103.&\mathrm{Tr}(\widehat{A})=\mathrm{Tr} A&104.&\widehat{A^T}=(\widehat{A})^T\\
105.&AB\in W_p,A,B\in W_p&106.&AB\in W_p,A,B\in W_a\\
107.&AB\in W_a, A\in W_pB\in W_a\\
\end{array}$$
Now, \emph{Pasting by blocks} is given by:
$$A\diamond_b B:=\begin{pmatrix}A & \mathbf{0}_{n\times s}\\\mathbf{0}_{r\times m} &B \end{pmatrix}\in\mathcal{M}_{(n+r)\times(m+s)}.$$
where $A\in\mathcal{M}_{n\times m}$ and $B\in\mathcal{M}_{r\times s}.$ It is well known that Pasting by blocks corresponds to a particular case of \emph{block matrices}, also called \emph{partitioned matrices}, see \cite{hall,lantis}. The next result,  is consequence of properties $84$ to $106$. \\

Consider the matrices $A \in M_{n\times m}$, $B \in M_{p\times q}$ and $C \in M_{r\times s}$, we have:
$$\begin{array}{llll}
108.&\widehat{(A\diamond_b B)}=\widehat{(B)}\diamond_b \widehat{(A)}&109.&(A\diamond_b B)\diamond_b C=A\diamond_b (B\diamond_b C)\\
110.&A \diamond_b B\in \mathcal M_{(n+p)\times (m+q)}&111.&(A\diamond_b B)^T=A^T\diamond_b B^T\\
112.&\det (A\diamond_b B)=\det A  \det B&113.&\mathrm{Tr}(A\diamond_b B)=\mathrm{Tr} A+\mathrm{Tr} B\\
114.&(A\diamond_b B)^{-1}=A^{-1}\diamond_b B^{-1}& &\\
\end{array}$$
\bigskip

\section{Main Results}
This section contains original results, which can be implemented in courses of linear algebra.

We denote by $ R$ Reversing mapping, i.e., $\mathcal R:V\rightarrow V$ where for all $v=(v_1,v_2,\ldots,v_n)$, $\mathcal R(v)=(v_n,\ldots,v_2,v_1)$. Thus, we obtain the following results.
\begin{proposition}\label{proplint}
The following statements hold
\begin{enumerate}
\item  $\mathcal R\in Aut(V)$.
\item The transformation matrix of $\mathcal R$ is given by  \begin{displaymath} M_\mathcal {R}=(\delta_{i,n-j+1})_{n\times n}.\end{displaymath}
\item Minimal and characteristic polynomial of $\mathcal{R}$ are given respectively by $$Q(\lambda)=\lambda^2-1,\quad P(\lambda)=P(\lambda)=(\lambda+1)^{\lfloor \frac{n}{2}\rfloor}(\lambda-1)^{\lceil\frac{n}{2}\rceil}.$$
\item $\ker(\mathcal R- id)^{\perp}=\ker(\mathcal R+id) $, $id(v)=v$, $\forall v\in V$.
\item $V=\ker(\mathcal R-id)\oplus \ker(\mathcal R+id)$.
\item $\dim \ker(\mathcal R-id)=\left\lceil\frac{n}{2}\right\rceil$, $\dim \ker(\mathcal R+id)=\left\lfloor\frac{n}{2}\right\rfloor$.
 \end{enumerate}
\end{proposition}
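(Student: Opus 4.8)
The plan is to reduce every item to the two identifications
$$\ker(\mathcal R - id) = W_p, \qquad \ker(\mathcal R + id) = W_a,$$
which are immediate from the definition $\mathcal R(v)=\widetilde v$: the equation $\mathcal R(v)=v$ says exactly that $v$ is palindromic, and $\mathcal R(v)=-v$ says exactly that $v$ is antipalindromic. With these in hand, most parts follow from the preliminary properties. First I would settle (1) and (2). Linearity of $\mathcal R$ is property~2, and property~1 gives $\mathcal R\circ\mathcal R = id$, so $\mathcal R$ is its own inverse and hence bijective; therefore $\mathcal R\in \mathrm{Aut}(V)$. For the matrix I evaluate $\mathcal R$ on the standard basis, $\mathcal R(e_j)=e_{n-j+1}$, so the $(i,j)$ entry of $M_{\mathcal R}$ equals $1$ precisely when $i=n-j+1$, i.e.\ $M_{\mathcal R}=(\delta_{i,n-j+1})_{n\times n}$, the exchange (anti-identity) matrix.

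Next, parts (5) and (6) are direct consequences of the identifications together with properties~19, 16 and~18: property~19 gives $V=W_p\oplus W_a=\ker(\mathcal R-id)\oplus\ker(\mathcal R+id)$, while properties~16 and~18 supply the dimensions $\lceil n/2\rceil$ and $\lfloor n/2\rfloor$.

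For (3), property~1 shows $\mathcal R^2-id=0$, so the minimal polynomial divides $\lambda^2-1=(\lambda-1)(\lambda+1)$; since (for $n\ge 2$) both $W_p$ and $W_a$ are nonzero by (6), neither $\mathcal R=id$ nor $\mathcal R=-id$ holds, so no single linear factor annihilates $\mathcal R$ and hence $Q(\lambda)=\lambda^2-1$. Because $Q$ has distinct roots, $\mathcal R$ is diagonalizable, so the algebraic multiplicities of the eigenvalues $1$ and $-1$ coincide with the geometric multiplicities found in (6), giving $P(\lambda)=(\lambda-1)^{\lceil n/2\rceil}(\lambda+1)^{\lfloor n/2\rfloor}$. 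Finally, for (4) I would invoke property~3: for $v\in W_p$ and $w\in W_a$ we have $v\cdot w=\widetilde v\cdot\widetilde w=v\cdot(-w)=-(v\cdot w)$, so $v\cdot w=0$ since $\mathbb K$ has characteristic zero; thus $W_a\subseteq W_p^{\perp}$. A dimension count $\dim W_p^{\perp}=n-\lceil n/2\rceil=\lfloor n/2\rfloor=\dim W_a$ upgrades the inclusion to equality, yielding $\ker(\mathcal R-id)^{\perp}=\ker(\mathcal R+id)$.

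The step I expect to demand the most care is (3): one must both exclude the proper divisors $\lambda-1$ and $\lambda+1$ as candidates for the minimal polynomial and then justify that diagonalizability is what licenses using the geometric multiplicities of (6) as the exponents in the characteristic polynomial. Everything else is a faithful translation of the preliminary identities through the identifications $\ker(\mathcal R\mp id)=W_{p},W_{a}$.
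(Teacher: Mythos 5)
Your proof is correct, but it runs the logic in the opposite direction from the paper's for items (3), (5) and (6), and it actually closes a small gap the paper leaves in (4). The paper does not import the dimension formulas from the preliminaries: it proves (5) directly from the operator identity $\mathcal R^2-id=(\mathcal R-id)(\mathcal R+id)$, obtains the characteristic polynomial in (3) by checking $n=2$ and $n=3$ and asserting the general case ``inductively,'' and only then reads off the dimensions in (6) from the exponents of $P(\lambda)$. You instead take $\dim W_p=\lceil n/2\rceil$, $\dim W_a=\lfloor n/2\rfloor$ and $V=W_p\oplus W_a$ as given (preliminary properties 16, 18, 19) and recover $P(\lambda)$ from diagonalizability: the minimal polynomial $\lambda^2-1$ has distinct roots, so algebraic and geometric multiplicities coincide. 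That argument is tighter than the paper's informal induction, but it turns items (5) and (6) into restatements of the preliminary facts rather than re-derivations --- a point worth noting only because the remark following the proposition presents it as a self-contained reformulation of exactly those facts in the language of endomorphisms. In (4) you go beyond the paper: its proof only verifies $v\cdot w=-v\cdot w=0$, i.e.\ $W_a\subseteq W_p^{\perp}$, and your dimension count $\dim W_p^{\perp}=n-\lceil n/2\rceil=\lfloor n/2\rfloor$ is what genuinely upgrades the inclusion to the claimed equality. Items (1) and (2) match the paper's proof. Finally, both you and the paper need $n\ge 2$ for the minimal-polynomial claim (for $n=1$ one has $\mathcal R=id$ and $Q(\lambda)=\lambda-1$); you flag this restriction, the paper does not.
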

\begin{proof} We proceed according to each item.
\begin{enumerate}
\item For any scalar $\alpha\in K$ and any pair of vectors $v,w\in V$ we obtain
$\mathcal R(v+w)=\widetilde{v+w}=\widetilde{v}+\widetilde{w}=\mathcal{R}v+\mathcal{R}w$ and $\mathcal R(\alpha v)=\widetilde{\alpha v}=\alpha\widetilde{v}=\alpha\mathcal{R}v$. Owing to $\mathcal{R}:V\rightarrow V$ and $\widetilde{\widetilde v}=v$ implies that $\mathcal R$ is left-right invertible, then $\mathcal R$ is an automorphism of $V$.
\item By definition of Reversing we have
$\mathcal{R}:V\rightarrow V$ is such that $$\mathcal R(v_1,v_2,\ldots, v_{n-1},v_n)=(v_1,v_2,\ldots, v_{n-1},v_n)(\delta_{i,n-j+1})_{n\times n}.$$

\item We observe that $\mathcal R^2=id$ and $\mathcal R\neq id$, therefore $Q(\lambda)=\lambda^2-1$ is the minimal polynomial of $\mathcal{R}$, that is, $Q(\delta_{i,n-j+1})=\mathbf 0_n\in\mathcal M_{n\times m}$ ($\mathbf 0_n$ is the zero matrix of size $n\times n$). For instance, $Q$ divides to the characteristic polynomial of  $\mathcal R$. Now, assuming $n=2$ we have that the characteristic polynomial of $\mathcal R$ is given by  $P(\lambda)=(\lambda+1)(\lambda-1)$, assuming $n=3$ we obtain $P(\lambda)=(\lambda+1)(\lambda-1)^2$. Therefore the characteristic polynomial of $\mathcal R$ is obtained inductively and it is given by
\begin{displaymath}P(\lambda)=\det (\delta_{i,n-j+1}-\lambda I_n)=\displaystyle{ \left\{ { (\lambda+1)^m(\lambda-1)^m\,for\,n=2m \atop (\lambda+1)^m(\lambda-1)^{m+1}\,for\,n=2m+1         } \right.      }.\end{displaymath} That is, by definition of floor and ceiling functions we conclude $$P(\lambda)=(\lambda+1)^{\lfloor \frac{n}{2}\rfloor}(\lambda-1)^{\lceil\frac{n}{2}\rceil}.$$ 
\item Assume $v\in \ker(\mathcal R- id)$ and $w\in \ker(\mathcal R+ id)$, for instance $\mathcal Rv=v$, $\mathcal R(w)=-w$ and $v\cdot w=-v\cdot w=0$. In this way $\ker(\mathcal R- id)^{\perp}=\ker(\mathcal R+id) $.
\item Due to $\mathcal R^2-id=(\mathcal R-id)(\mathcal R+id)$, then $V=\ker(\mathcal R-id)\oplus \ker(\mathcal R+id)$.
\item  By definition of $\lfloor\frac{n}{2}\rfloor$,  $\lceil\frac{n}{2}\rceil$ and item 3 we have that $(\lambda-1)^{\lceil\frac{n}{2}\rceil}(\lambda+1)^{\lfloor\frac{n}{2}\rfloor}=P(\lambda)$. Now, by item 5 we have $V=\ker(\mathcal R-id)\oplus \ker(\mathcal R+id)$, then we conclude $\dim \ker(\mathcal R-id)=\lceil\frac{n}{2}\rceil$, $\dim \ker(\mathcal R+id)=\lfloor\frac{n}{2}\rfloor$ and $\lceil\frac{n}{2}\rceil+\lfloor\frac{n}{2}\rfloor=n$.
\end{enumerate}
\end{proof}
\begin{remark} Proposition \ref{proplint} summarizes some results given in \cite[\S 1]{acchro2}, without the formalism of endomorphism. In particular, $W_p:= \ker(\mathcal R-id),$ $W_a:=\ker(\mathcal R+id)$ and $\mathcal{R}$ is an endomorphism associated to a \emph{permutation matrix}. Recall that $A_{\sigma}$ is a permutation matrix, defined over a given $\sigma\in S_n$, whether its associated linear mapping $\mathcal{R}_{\sigma}$ is given by $$\mathcal{R}_{\sigma}\,:\begin{array}{ccc}
 V&\longrightarrow& V\\(v_1,\ldots,v_n)&\mapsto&(v_{\sigma(1)},\ldots,v_{\sigma(n)}).
 \end{array}
 $$
 This means that Reversing corresponds to $\mathcal R_\sigma=\mathcal R$, where the permutation $\sigma$ and the matrix $A_\sigma$ are given respectively by $$\sigma=\begin{pmatrix}
 1&2&3&\ldots&n-1&n\\n&n-1&n-2&\ldots&2&1
 \end{pmatrix}, \quad A_\sigma=\begin{pmatrix}
 0&0&0&\ldots&0&1\\0&0&0&\ldots&1&0\\\vdots\\0&1&0&\ldots&0&0\\1&0&0&\ldots&0&0
 \end{pmatrix}.$$
 \end{remark}
To illustrate this formalism, we rewrite items $1,2,3$ and $4$.

$$\begin{array}{ll}
1.&\mathcal R^2(v)=v\\
2.&\mathcal R(av+bw)=a\mathcal R(v)+b\mathcal R(w),\,\,a,b\in
K,\,v,w\in V\\
3.&v\cdot w=\mathcal R(v)\cdot\mathcal R(w)\\
4.&\mathcal R(v\times w)=\mathcal R(w)\times\mathcal R(v),\,\,\forall v,w\in K^{3}\end{array}$$

We say that a mapping is \textit{palindromicing} whether it transform any vector of a given vector space into a palindromic vector, i.e., it is epimorphism from $V$ to $\ker(\mathcal R-id)$. In a similar way, we say that a mapping is \textit{antipalindromicing} whether it transform any vector of a given vector space into an antipalindromic vector, i.e., it is epimorphism from $V$ to $\ker(\mathcal R+id)$. Moreover, we say that palindromicing and antipalindromicing mappings are \textit{canonical},  denoted by $\mathcal F_p$ and $\mathcal F_a$ respectively, whether they are linear mappings and for all $v\in V$ they satisfy $v=(\mathcal F_p+\mathcal F_a)(v)$ and $\mathcal R(v)=(\mathcal F_p-\mathcal F_a)(v)$. From now on we only consider canonical palindromicing and antipalindromicing mappings, which will be called \textit{Palindromicing} and \textit{Antipalindromicing} mappings. The following proposition give us the characterization of Palindromicing and Antipalindromicing mappings.
\begin{proposition}[Palindromicing and Antipalindromicing mappings]\label{proppalant}
The following statements hold.
\begin{enumerate}
\item $\mathcal F_p$ and $\mathcal F_a$ are given by
\begin{displaymath}\begin{array}{lllllllllllll}
 \mathcal F_p&:& &V&\rightarrow&\ker(\mathcal R-id)&&& \mathcal F_a&:& V&\rightarrow&\ker(\mathcal R+id) \\ &&&&&&,&&&&&&\\&&& v&\mapsto&\frac12(v+\mathcal R(v))&&&&& v&\mapsto&\frac12(v-\mathcal R(v))\end{array}.
 \end{displaymath} 
\item $\ker(\mathcal F_p)=\mathrm{Im}(\mathcal{F}_a)$, $\ker(\mathcal F_a)=\mathrm{Im}(\mathcal{F}_p)$.
\item The companion matrices of $\mathcal F_p$ and $\mathcal F_a$ are $M_\mathcal{R}+I_n$ and $M_\mathcal{R}-I_n$.
\end{enumerate}
\end{proposition}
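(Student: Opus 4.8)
The plan is to dispatch the three items in order, the first being the substantive one. For item (1), I would start from the two defining relations of a canonical pair, namely $v=(\mathcal F_p+\mathcal F_a)(v)$ and $\mathcal R(v)=(\mathcal F_p-\mathcal F_a)(v)$ for every $v\in V$, and read them as a linear system in the unknowns $\mathcal F_p(v),\mathcal F_a(v)$. Adding and subtracting these gives at once $\mathcal F_p(v)=\tfrac12(v+\mathcal R(v))$ and $\mathcal F_a(v)=\tfrac12(v-\mathcal R(v))$, so the two conditions determine the maps uniquely, and linearity is inherited from that of $\mathcal R$ (Proposition \ref{proplint}(1)). It then remains to check that these formulas really are palindromicing and antipalindromicing, i.e. that they land in the claimed codomains and are surjective onto them. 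Here I would invoke $\mathcal R^2=id$ (Proposition \ref{proplint}(3)): applying $\mathcal R$ to $\mathcal F_p(v)$ gives $\tfrac12(\mathcal R(v)+v)=\mathcal F_p(v)$, so $\mathrm{Im}(\mathcal F_p)\subseteq\ker(\mathcal R-id)$, and symmetrically $\mathrm{Im}(\mathcal F_a)\subseteq\ker(\mathcal R+id)$. Surjectivity follows because on a palindromic vector $w$ (where $\mathcal R(w)=w$) one has $\mathcal F_p(w)=w$, so $\mathcal F_p$ fixes $\ker(\mathcal R-id)$ pointwise and is therefore an epimorphism onto it; the same argument handles $\mathcal F_a$ on $\ker(\mathcal R+id)$.

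For item (2), I would combine item (1) with a direct kernel computation. Since $\mathcal F_p(v)=0$ is equivalent to $v+\mathcal R(v)=0$, that is to $\mathcal R(v)=-v$, we get $\ker(\mathcal F_p)=\ker(\mathcal R+id)$; and item (1) already identifies $\mathrm{Im}(\mathcal F_a)=\ker(\mathcal R+id)$, whence $\ker(\mathcal F_p)=\mathrm{Im}(\mathcal F_a)$. Interchanging the roles of $+$ and $-$ yields $\ker(\mathcal F_a)=\ker(\mathcal R-id)=\mathrm{Im}(\mathcal F_p)$. Equivalently, one can first check $\mathcal F_p\circ\mathcal F_a=0=\mathcal F_a\circ\mathcal F_p$ to obtain the inclusions $\mathrm{Im}(\mathcal F_a)\subseteq\ker(\mathcal F_p)$ and $\mathrm{Im}(\mathcal F_p)\subseteq\ker(\mathcal F_a)$ directly, then close the gap by matching dimensions via Proposition \ref{proplint}(6).

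For item (3), I would pass to matrices in the row-vector convention of Proposition \ref{proplint}(2), where $\mathcal R(v)=vM_\mathcal R$. By linearity $\mathcal F_p(v)=v\cdot\tfrac12(I_n+M_\mathcal R)$ and $\mathcal F_a(v)=v\cdot\tfrac12(I_n-M_\mathcal R)$, so the matrices representing $\mathcal F_p$ and $\mathcal F_a$ are $\tfrac12(M_\mathcal R+I_n)$ and $\tfrac12(I_n-M_\mathcal R)$, which coincide with the stated companion matrices $M_\mathcal R+I_n$ and $M_\mathcal R-I_n$ up to the normalizing scalar (and a sign for $\mathcal F_a$). Since $M_\mathcal R=(\delta_{i,n-j+1})$ is an explicit permutation matrix, this renders both companion matrices completely explicit.

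The step I expect to carry the real content is the well-definedness and surjectivity part of item (1): one must verify that the formulas forced by the defining system genuinely map into $\ker(\mathcal R\mp id)$ and exhaust these subspaces, and this is precisely where $\mathcal R^2=id$ and the pointwise-identity behaviour of $\mathcal F_p,\mathcal F_a$ on each eigenspace enter. The remaining items are then essentially bookkeeping, modulo the harmless $\tfrac12$ normalization appearing in the companion matrices.
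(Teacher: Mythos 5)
Your proposal is correct and matches the paper's own argument in all essentials: the paper likewise verifies linearity from that of $\mathcal R$, checks that the two formulas land in $\ker(\mathcal R - id)$ and $\ker(\mathcal R + id)$ (by writing out the coordinates $\tfrac12(v_1\pm v_n,\ldots,v_n\pm v_1)$ rather than invoking $\mathcal R^2=id$ abstractly), confirms the two canonical identities, and computes the kernels in item (2) exactly as you do, while your derivation of the formulas as the unique solution of the defining linear system is a small but welcome strengthening the paper omits. Your remark on item (3) is also apt: in the row-vector convention the transformation matrices are really $\tfrac12(I_n+M_{\mathcal R})$ and $\tfrac12(I_n-M_{\mathcal R})$, and the paper's one-line proof ("it follows directly") silently ignores the normalization and sign that you correctly flag.
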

\begin{proof}
Consider $v=(v_1,\ldots,v_n)$. We proceed according to each item
\begin{enumerate}
\item We can see that $\mathcal F_p$ and $\mathcal F_a$ are linear mappings due to $\mathcal R$ and $id$ are linear mappings. Now, $\mathcal{F}_p(v)=\frac{1}{2}(v_1+v_n,v_2+v_{n-1},\ldots, v_{n-1}+v_2,v_n+v_1)$ is a palindromic vector, $\mathcal{F}_a(v)=\frac{1}{2}(v_1-v_n,v_2-v_{n-1},\ldots, v_{n-1}-v_2,v_n-v_1)$ is an antipalindromic vector, for instance $\mathcal F_p$ and $\mathcal F_a$ are epimorphisms from $V$ to $\ker(\mathcal R-id)$ and from $V$ to $\ker(\mathcal R-id)$. Furthermore, we see that $(\mathcal F_p+\mathcal F_a)(v)=\mathcal{F}_p(v)+\mathcal{F}_a(v)=v$ and $(\mathcal F_p-\mathcal F_a)(v)=\mathcal{F}_p(v)-\mathcal{F}_a(v)=\mathcal R(v)$. Thus, $\mathcal F_p$ is the Palindromicing mapping and $\mathcal F_a$ is the Antipalindromicing mapping.
\item Assume $v\in\ker\mathcal{F}_p$ and $w\in\ker\mathcal{F}_a$. We see that $\mathcal{F}_p(v)=0$ implies that $\mathcal{R}(v)=-v$, thus $v\in W_a=\ker(\mathcal R+id)=\mathrm{Im}(\mathcal{F}_a)$. Similarly, we see that $\mathcal{F}_a(w)=0$ implies that $\mathcal{R}(w)=w$, thus $w\in W_p=\ker(\mathcal R-id)=\mathrm{Im}(\mathcal{F}_p)$.
\item It follows directly from the companion matrices of the linear mappings $\mathcal{R}$ and $id$.
\end{enumerate}
\end{proof}
\begin{remark} Palindromicing and Antipalindromicing mappings are not isomorphisms due to they are not monomorphisms. This fact is proven in item 2 of Proposition \ref{proppalant}.
\end{remark}
Now we define Pasting between vectors in the following form.

\begin{definition} Let $V$, $W$ and $Z$ be $\mathbb K$-vector spaces. Consider $v=(v_1,\ldots,v_n)\in V$ and $w=(w_1,\ldots,w_m)\in W$. Pasting is the mapping $\mathcal P$ such that $$ \mathcal P: \begin{array}{ccc}
V\times W&\rightarrow& Z\\(v,w)&\mapsto&\mathcal P(v,w)=z,\end{array}$$ where $z=(v_1,\ldots,v_n,w_1,\ldots,w_m)$. Furthermore $\mathcal P(V,W):=\{\mathcal P(v,w):\,v\in V, w\in W\}$.
\end{definition}

\begin{theorem}\label{teor1} Consider $V=\mathbb{K}^n$, $W=\mathbb{K}^m$, $Z= \mathbb{K}^{n+m}$, $V'\leq Z$ and $W'\leq Z$ generated by $\{e_1^{n+m},\ldots, e_n^{n+m}\}$ and $\{e_{n+1}^{n+m},\ldots, e_{n+m}^{n+m}\}$ respectively, where $e_i^r$ is the $i$-th vector of the canonical basis of $\mathbb{K}^r$. The mappings $\varphi_1$, $\varphi_2$, $\varphi$ and $\mathcal S$ are given by
\begin{equation*}
\begin{split}\varphi_1:&V\rightarrow V'\\
&v\rightarrow \varphi_1(v)=v',\, v_i=v'_i,\,\,\forall i\leq n
\end{split}\quad \quad 
\begin{split}\varphi_2:&W\rightarrow W'\\
&w\rightarrow \varphi_2(w)=w',\, w_i=w'_{n+i},\,\,\forall i\leq m
\end{split}
\end{equation*}

\begin{equation*}
\begin{split}\varphi:&V\times W\rightarrow V'\times W'\\
&(v,w)\rightarrow\varphi(v,w)=(\varphi_1( v),\varphi_2(w))
\end{split}
\begin{split}\mathcal S:&V'\times W'\rightarrow Z\\
&(v',w')\rightarrow S(v',w')=v'+w'
\end{split}
\end{equation*}
The following statements hold
\begin{enumerate}
\item Mappings $\varphi_1$ and $\varphi_2$  are linear isomorphisms.
\item The transformation matrices of $\varphi_1$ and $\varphi_2$ are given by  $M_{\varphi_1}=(\delta_{i,j})_{n\times (n+m)}$ and $M_{\varphi_2}=(\delta_{i,j-n})_{m\times (n+m)}$ respectively.
\item $\mathrm{im}(\varphi_1)\oplus\mathrm{im}(\varphi_2)=V$.
\item $(\mathrm{im}(\varphi_1))^\perp=\mathrm{im}(\varphi_2)$.
\item The diagram
$$\begin{diagram}
\node{V\times W} \arrow{e,t}{\mathcal P} \arrow{s,l}{\varphi=(\varphi_1,\varphi_2)} \node{Z}\\
\node{V'\times W'}\arrow{ne,r}{S}\end{diagram}$$ is commutative, i.e., $\mathcal P=S\circ\varphi$.
\end{enumerate}
\end{theorem}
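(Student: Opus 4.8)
The plan is to verify the five items by direct coordinatewise computation, since every map in sight is defined by reading off or rearranging coordinates. The entire content of the theorem is that Pasting factors as ``embed each factor into complementary coordinate blocks of $Z$, then add,'' together with the elementary fact that these two blocks give an internal direct sum decomposition of $Z$. Accordingly, I would organize the proof item by item, reusing the row-vector convention already adopted in the treatment of $\mathcal{R}$.

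For item (1), I would observe that $\varphi_1$ sends $(v_1,\ldots,v_n)$ to the vector of $Z$ whose first $n$ entries are $v_1,\ldots,v_n$ and whose last $m$ entries vanish. Linearity is immediate from this coordinatewise description; injectivity follows because $\varphi_1(v)=0$ forces every $v_i=0$; and surjectivity onto $V'=\langle e_1^{n+m},\ldots,e_n^{n+m}\rangle$ holds because every element of $V'$ has exactly this form. The argument for $\varphi_2$ is identical with the indices shifted by $n$. For item (2), I would write $\varphi_1(v)=vM_{\varphi_1}$ and read off the entries from $\varphi_1(v)_j=v_j$ for $j\le n$ and $\varphi_1(v)_j=0$ otherwise, which forces $(M_{\varphi_1})_{i,j}=\delta_{i,j}$; the same bookkeeping with the shift by $n$ gives $(M_{\varphi_2})_{i,j}=\delta_{i,j-n}$.

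Items (3) and (4) are structural facts about the two coordinate subspaces $V'=\mathrm{im}(\varphi_1)$ and $W'=\mathrm{im}(\varphi_2)$ of $Z=\mathbb{K}^{n+m}$. For (3), since $V'$ is spanned by the first $n$ standard basis vectors and $W'$ by the last $m$, their sum spans all of $Z$ and their intersection is trivial, so $V'\oplus W'=Z$ as an internal direct sum; here I read the target space as $Z$, since $V'\oplus W'$ lives in $\mathbb{K}^{n+m}$. For (4), the relation $e_i^{n+m}\cdot e_j^{n+m}=\delta_{ij}$ shows that any vector supported on the first $n$ coordinates is orthogonal to any vector supported on the last $m$; a dimension count then yields $(V')^{\perp}=W'$.

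Finally, for item (5), I would evaluate both composites on an arbitrary pair $(v,w)$. By definition $\mathcal{P}(v,w)=(v_1,\ldots,v_n,w_1,\ldots,w_m)$, while $\varphi(v,w)=(\varphi_1(v),\varphi_2(w))$ and $\mathcal{S}$ adds its two arguments, so $(\mathcal{S}\circ\varphi)(v,w)=\varphi_1(v)+\varphi_2(w)$, which is exactly the vector whose first block is $v$ and whose second block is $w$. The two agree, establishing $\mathcal{P}=\mathcal{S}\circ\varphi$. I do not anticipate a genuine obstacle: every assertion reduces to inspecting coordinates. The only point requiring mild care is keeping the index shift by $n$ consistent across $\varphi_2$, its matrix, and the second block in item (5), and interpreting item (3) with target $Z$ rather than the printed $V$.
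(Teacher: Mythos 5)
Your proposal is correct and follows essentially the same route as the paper's own proof: coordinatewise verification of linearity, injectivity and surjectivity for item (1), direct reading of the matrices for item (2), the span/trivial-intersection argument for item (3), orthogonality of the complementary coordinate blocks for item (4), and the one-line evaluation of $(\mathcal{S}\circ\varphi)(v,w)$ for item (5). Your observation that the target in item (3) should be read as $Z$ rather than the printed $V$ is a correct catch of a typo in the statement.
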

\begin{proof} We proceed according to each item:
\begin{enumerate}
\item Assume $\alpha,\beta\in \mathbb{K}$, $v_1,v_2 \in V$ and $w_1,w_2\in W$ such that $v_1=(v_{11},\ldots,v_{1n})$, $v_2=(v_{21},\ldots v_{2n})$, $w_1=(w_{11},\ldots,w_{1n})$, $w_2=(w_{21},\ldots w_{2n})$. By definition of $\varphi_1$ and $\varphi_2$ we have that $$\varphi_1(v_1)=(v_{11},\ldots,v_{1n},\underbrace{0,\ldots,0}_{m\,\, times}),\quad \varphi_1(v_2)=(v_{21},\ldots,v_{2n},\underbrace{0,\ldots,0}_{m\,\, times},$$
$$\varphi_2(w_1)=(\underbrace{0,\ldots,0}_{n\,\, times},w_{11},\ldots,w_{1n}),\quad \varphi_2(w_2)=(\underbrace{0,\ldots,0}_{n\,\, times},w_{21},\ldots,w_{2n}).$$ Therefore, $\varphi_1(\alpha v_1+\beta v_2)=\alpha \varphi_1(v_1)+\beta \varphi_1(v_2)$ and  $\varphi_2(\alpha w_1+\beta w_2)=\alpha \varphi_2(w_1)+\beta \varphi_2(w_2)$. Now, due to $\varphi_1(v)=\mathbf{0}_{V'}$ if and only if $v=\mathbf{0}_V$ and  $\varphi_2(w)=\mathbf{0}_{W'}$ if and only if $w=\mathbf{0}_W$, we get $\ker\varphi_1=\{\mathbf{0}_V\}$ and $\ker\varphi_2=\{\mathbf{0}_W\}$. Finally, for all $v'\in V'$ and for all $w'\in W'$ we get that there exist $v\in V$ and $w\in W$ such that $\varphi_1(v)\in V'$ and   $\varphi_2(w)\in W'$, i.e., $\mathrm{im}(\varphi_1)=V'$ and $\mathrm{im}(\varphi_2)=W'$. Thus, we conclude that $\varphi_1$ and $\varphi_2$ are linear mappings, monomorphisms and epimorphisms.
\item We see that for all $v\in V$ and  for all $w\in W$, we obtain $$\varphi_1(v)=v(\delta_{i,j})_{n\times (n+m)}=vM_{\varphi_1}, \quad \varphi_2(w)=w(\delta_{i,j-n})_{m\times (n+m)}=wM_{\varphi_2}.$$
\item By item 1 we have that $\mathrm{im}(\varphi_1)=V'$ and $\mathrm{im}(\varphi_2)=W'$. We see that $V'\cap W'=\{\mathbf{0}_Z\}$ and $Z=\{\alpha v+\beta w: \alpha, \beta \in \mathbb{K}, v\in V, w\in W$\}.
\item Owing to $v'\cdot w'=0$ for all $v'\in V'$ and $w'\in W'$ and by previous item we have that $W'$ is the orthogonal subspace of $V'$.
\item 
Let $v\in V$ and $w\in W$ then 
\begin{equation*}
\begin{split}(S\circ\varphi)(v,w)&=S(\varphi_1(v),\varphi_2(w))\\
&=\varphi_1(v)+\varphi_2(w)\\
&=\mathcal P(v,w)
\end{split}
\end{equation*}
therefore $\mathcal P=S\circ\varphi$ 
\end{enumerate}
\end{proof}
We illustrate item 5 of previous proposition through the following example.
\begin{example}
Consider the vectors $(1,3,4,5)$, $(2,4,3)$	$(S\circ\varphi)((1,3,4,5),(2,4,3))=S((1,3,4,5,0,0,0),(0,0,0,0,2,4,3))=(1,3,4,5,2,4,3)$
\end{example}

%

To illustrate this formalism we rewrite the following items.
$$\begin{array}{ll}
11.&\mathcal P(V,W)\cong K^{n+m}\\
12.&\dim(\mathcal P(V,W)) =\dim V+\dim W\\
13.&\mathcal R(\mathcal P(v,w))=\mathcal P(\mathcal R(v),\mathcal R(w))\\
14.&\mathcal P(\mathcal P(v,w),z)=\mathcal P(v,\mathcal P(w,z))\end{array}$$

Now we define Reversing and Pasting mappings for matrices. We start introducing Reversing of matrices of three different kinds.
\begin{definition} Assume $A=(a_{ij})_{n\times m}$, $a_{ij}\in \mathbb K$. Reversing for rows of $A$, denoted by $\mathcal R_r(A)$,  Reversing for columns of $A$, denoted by $\mathcal R_c(A)$, and  Reversing of $A$, denoted by $\mathcal R(A)$, are given by $$\mathcal R_r(A)=(a_{i(m+1-j)})_{n\times m},\quad \mathcal R_c(A)=(a_{(n+1-i)j})_{n\times m},\quad \mathcal R(A)=(a_{(n+1-i)(m+1-j)})_{n\times m}$$ respectively.
\end{definition}
We illustrate in the following example each reversing for matrices.
\begin{example}
	Consider $A=\begin{pmatrix}4&6&8&8\\1&3&5&4\\3&2&7&7\end{pmatrix}$
	\begin{enumerate} 
		\item Reversing by rows of $A$ is given by 
		$\mathcal R_r(A)=\begin{pmatrix}8&8&6&4\\4&5&3&1\\7&7&2&3\end{pmatrix} $
		\item Reversing by columns of $A$ is given by
		$\mathcal R_c(A)=\begin{pmatrix}3&2&7&7\\1&3&5&4\\4&6&8&8\end{pmatrix}$
		\item Reversing of $A$ is given by
		$\mathcal R(A)=\begin{pmatrix}7&7&2&3\\4&5&3&1\\8&8&6&4\end{pmatrix}$
	\end{enumerate}
\end{example}

To avoid confusion, assume Reversing of vectors and Reversing of matrices transformations denoted by $\mathcal R_v$ and $\mathcal R_m$ respectively. The ``vectorization'' mapping, denoted by $\rho_{nm}$, transforms a matrix belonging to $\mathcal M_{n\times m}(\mathbb{K})$ in a vector belonging to $\mathbb{K}^{nm}$. We can recover the previous results involving Reversing of vector with Reversing of matrices transformations through the vectorization mapping, which is presented in the following lemma.
\begin{lemma}\label{teor2} Mapping $\rho_{nm}$ is a linear isomorphism between $\mathcal M_{n\times m}$ and $\mathbb K^{nm}$.
\end{lemma}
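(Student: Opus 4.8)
The plan is to make the coordinate description of $\rho_{nm}$ explicit and then verify the two defining conditions of a linear isomorphism: linearity and bijectivity. Following the row-by-row reading already used implicitly in the preliminaries (property $83$ and the displayed formula for $A\diamond B$), I would set
$$\rho_{nm}(A) = (a_{11},\ldots,a_{1m},a_{21},\ldots,a_{2m},\ldots,a_{n1},\ldots,a_{nm}),$$
so that the entry $a_{ij}$ lands in coordinate $k=(i-1)m+j$ of the output vector. The first step is to confirm that the index assignment $(i,j)\mapsto (i-1)m+j$ is a bijection between $\{1,\ldots,n\}\times\{1,\ldots,m\}$ and $\{1,\ldots,nm\}$; this pins down that $\rho_{nm}$ is a well-defined map into $\mathbb{K}^{nm}$.

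Next I would verify linearity. Since matrix addition and scalar multiplication in $\mathcal M_{n\times m}$ are defined entrywise, the $k$-th coordinate of $\rho_{nm}(\alpha A+\beta B)$ is $\alpha a_{ij}+\beta b_{ij}$, which equals $\alpha$ times the $k$-th coordinate of $\rho_{nm}(A)$ plus $\beta$ times that of $\rho_{nm}(B)$. Hence $\rho_{nm}(\alpha A+\beta B)=\alpha\,\rho_{nm}(A)+\beta\,\rho_{nm}(B)$ coordinatewise.

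For bijectivity I would establish injectivity by computing the kernel: $\rho_{nm}(A)=\mathbf 0$ forces every coordinate to vanish, i.e. $a_{ij}=0$ for all $i,j$, so $A=\mathbf 0_{n\times m}$ and $\ker\rho_{nm}=\{\mathbf 0_{n\times m}\}$. Surjectivity then follows by exhibiting an explicit inverse: given $x=(x_1,\ldots,x_{nm})$, define $\rho_{nm}^{-1}(x)$ to be the matrix whose $(i,j)$ entry is $x_{(i-1)m+j}$, the bijectivity of the index map from the first step guaranteeing that this is a well-defined two-sided inverse to $\rho_{nm}$. Alternatively, since $\dim\mathcal M_{n\times m}=nm=\dim\mathbb K^{nm}$, injectivity alone yields bijectivity by rank--nullity. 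Either way $\rho_{nm}$ is a linear isomorphism.

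The argument is entirely routine and there is no genuine obstacle. The only point demanding care is fixing once and for all the ordering convention encoded in the index map $(i,j)\mapsto (i-1)m+j$ (reading by rows, not by columns), so that all subsequent identities relating the matrix Reversing transformations $\mathcal R_m$ to the vector Reversing $\mathcal R_v$ through $\rho_{nm}$ remain consistent. Getting that convention right is precisely what makes the recovery of the vector-level results in the statements that follow go through cleanly.
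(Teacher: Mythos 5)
Your proof is correct and follows essentially the same route as the paper, which simply asserts that $\rho_{nm}$ is a linear transformation, a monomorphism and an epimorphism "using basic linear algebra"; you supply the explicit index map $(i,j)\mapsto (i-1)m+j$ and the routine verifications that the paper omits. Your closing remark about fixing the row-major ordering convention is a worthwhile addition, since that convention is what makes the later identities relating $\mathcal R_m$ and $\mathcal R_v$ through $\rho_{nm}$ consistent.
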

\begin{proof}
It is well known that $\mathcal M_{n\times m}$ and $\mathbb K^{nm}$ are isomorphic vector spaces. Using basic linear algebra we can see that $\rho_{nm}$ is a linear transformation, is a monomorphism and is an epimorphism.
\end{proof}
\begin{remark}
Using Lemma \ref{teor2} we obtain the analogue version for matrices of Theorem \ref{teor1}.
\end{remark}
Now we introduce the definition of Pasting mappings by rows, columns and blocks.
\begin{definition}
Assume $A\in\mathcal M_{n\times m},\, B\in  \mathcal M_{n\times p}$ and $C\in  \mathcal M_{q\times m}$, Pasting by rows of $A$ with $B$, denoted by $\mathcal P_r(A,B)$ is given by $$\mathcal P_r(A,B)= \begin{pmatrix} (S\circ\varphi)(a_1,b_1)\\\vdots\\(S\circ\varphi)(a_n,b_n) \end{pmatrix},\quad\textrm{where}\quad a_i\in K^m,\,b_i\in K^p.$$ Pasting by columns of $A$ with $C$, denoted by $\mathcal P_c(A,C)$, is given by $$\mathcal P_c(A,C)=\begin{pmatrix} (S\circ\varphi)(a_1,c_1)\ldots(S\circ\varphi)(a_m,c_m) \end{pmatrix},\quad\textrm{where}\quad a_j\in K^n,\,c_j\in K^q.$$ 
Pasting by blocks of $B$ with $C$, denoted by $\mathcal P_b(B,C)$, is given by $$\mathcal P_b(B,C)=\mathcal P_c(\mathcal P_r(B,\mathbf{0}_{n\times m}),\mathcal P_r(\mathbf{0}_{q\times p},C)).$$
\end{definition} 
\begin{remark} We can see that previous definition allows to recover the formalism related with Pasting and Reversing of vectors, that is
$$\mathcal R_r(A)=\begin{pmatrix}\mathcal R(a_1)\\\vdots\\\mathcal R(a_n)\end{pmatrix},\quad\textrm{with}\quad a_i\in \mathbb K^m,$$ $$\mathcal R_c(A)=\begin{pmatrix}\mathcal R(a_1)&\cdots&\mathcal R(a_m)\end{pmatrix}\quad\textrm{with}\quad a_j\in \mathbb K^n,$$  $$\mathcal R(A)=\begin{pmatrix}\mathcal R(a_m)&\cdots&\mathcal R(a_1)\end{pmatrix}=\quad\textrm{with}\quad a_j\in \mathbb K^n,$$ 
$$\mathcal P_r(A,B)= \begin{pmatrix}
\mathcal P(a_1,b_1)\\\vdots\\\mathcal{P}(a_n,b_n)
\end{pmatrix},\quad\textrm{where}\quad a_i\in K^m,\,b_i\in K^p$$ and $$\mathcal P_c(A,C)=\begin{pmatrix}\mathcal P(a_1,c_1)&\ldots&\mathcal P(a_m,c_m)\end{pmatrix},\quad\textrm{where}\quad a_j\in K^n,\,c_j\in K^q.$$ 
\end{remark}
To illustrate this formalism we rewrite the following items.
$$\begin{array}{llll}
21.&\mathcal R^2_r(A)=A&
22.&\mathcal R^2_c(A)=A\\
23.&\mathcal R_r(\mathcal P_r(A,B))=\mathcal P_r(\mathcal R_r (B),\mathcal R_r (A))&
24.&\mathcal R_c(\mathcal P_c(A,B))=\mathcal P_c(\mathcal R_c (B),\mathcal R_c (A))\\
25.&\mathcal P_r(\mathcal P_r(A,B),C)=\mathcal P_r(A,\mathcal P_r(B,C))&
26.&\mathcal P_c(\mathcal P_c(A,B),C)=\mathcal P_c(A,\mathcal P_c(B,C))\\
27.&\mathcal R_r(\alpha A+\beta B)=\alpha\mathcal R_r(A)+\beta\mathcal R_r(B)&
28.&\mathcal R_c(\alpha A+\beta B)=\alpha\mathcal R_c(A)+\beta\mathcal R_c(B)\\
29.&\mathcal P_r(\mathcal M_{n\times m},\mathcal M_{n\times p})= \mathcal M_{n\times(m+p)}&
30.&\mathcal P_c(\mathcal M_{n\times m},\mathcal M_{l\times m})= \mathcal M_{(n+l)\times m}
\end{array}$$
$$\begin{array}{ll}
31.&\mathcal P_r(A,B)=A(\mathcal P_r(\mathcal P_c(I_n,\mathbf{0}_{(n-m)\times m}),\mathbf{0}_{n\times p})+\mathcal P_r (\mathbf{0}_{n\times m},B)\\
32.&\mathcal P_c(A,B)=A(\mathcal P_c(P_r(I_n,\mathbf{0}_{n\times (m-q)}), \mathbf{0}_{n\times p}))+\mathcal P_c(\mathbf{0}_{n\times m},B)\end{array}$$
$$\begin{array}{llll}
43.& (\mathcal R_r(A))^T=\mathcal R_c(A^T)&
44.& (\mathcal R_c(A))^T=\mathcal R_r(A^T)\\
45.&(\mathcal P_c(A,B))=\mathcal P_r(A^T,B^T)&
46.&(\mathcal P_r(A,B))=\mathcal P_c(A^T,B^T)\\
47.&\mathcal R_r(AB)=A\mathcal R_r(B)&
48.&\mathcal R_c(AB)=\mathcal R_c(A)B\\
49.&\det(\mathcal R_c(A))=(-1)^{\lfloor\frac{n}{2}\rfloor}\det A&
50.&\det(\mathcal R_r(A))=(-1)^{\lfloor\frac{n}{2}\rfloor}\det A\\
51.&(\mathcal R_c(A))^{-1}=\mathcal R_r(A^{-1})&
52.&(\mathcal R_r(A))^{-1}=\mathcal R_c(A^{-1})\end{array}$$
$$\begin{array}{ll}
81.&\mathcal R_r(M^{(k)})=M^{(n-k+1)}\mathcal R(I_{n-1}), 1\leq k\leq n\\
82.&\displaystyle \bigwedge_{i=1}^{n-1} \mathcal R_r({v_i})=(-1)^{\left\lceil\frac{3n}{2}\right\rceil}\mathcal R_r\left(\bigwedge_{i=1}^{n-1} (v_{i})\right)\end{array}$$
$$\begin{array}{llll}
83.&\mathcal R (A)=M_{\mathcal R_c}AM_{\mathcal R_r}&
84.&\mathcal R (\mathcal R (A))=A\\
85.&\mathcal R(\mathcal P(A,B))=\mathcal P(\mathcal R(B),\mathcal R(A))&
86.&\mathcal P( \mathcal P(A,B),C)=\mathcal P(A, \mathcal P(B, C))\end{array}$$
$$\begin{array}{ll}
87.&b,c\in K,\,p=n,q= m,\mathcal R (bA+cB)=b\mathcal R(A)+c\mathcal R(B)\end{array}$$
$$\begin{array}{llll}88.&\mathcal P (A, B)=\mathcal M_{r\times s}(K)&
97.&\mathcal R(I_n)=I_n\\
98.&\mathcal R(A)=\mathcal R_c\left(\mathcal R_r(A)\right)&
99.&\mathcal R(A)=\mathcal R_r\left(\mathcal R_c(A)\right)\\
100.&\mathcal R(AB)=\mathcal R(A)\mathcal R(B)&
101.&(\mathcal R(A))^{-1}=\mathcal R(A^{-1})\\
102.&\det (\mathcal R(A))=\det A&
103.&\mathrm{Tr}(\mathcal R(A))=\mathrm{Tr} A\\
104.&\mathcal R(A^T)=(\mathcal R(A))^T&
108.&\mathcal R(\mathcal P_b(A,B))=\mathcal P_B(\mathcal R(B),\mathcal R(A))\\
109.&\mathcal P_b(\mathcal P_b(A,B), C)=\mathcal P_b(A,\mathcal P_b (B, C))&
110.&\mathcal P_b (A, B)\in \mathcal M_{(n+p)\times (m+q)}\\
111.&\mathcal P_b(A, B)^T=\mathcal P_b(A^T, B^T)&
112.&\det (\mathcal P_b(A, B))=\det A  \det B\\
113.&\mathrm{Tr}(\mathcal P_b(A, B)=\mathrm{Tr} A+\mathrm{Tr} B&
114.&\mathcal P_b(A, B)^{-1}=\mathcal P_b(A^{-1},B^{-1}).
\end{array}$$
The following result is an extension of Proposition \ref{proplint} according with the previous definition.
\begin{proposition}\label{proplint2}
Consider $V=\mathcal M_{n\times m}$. The following statements hold.
\begin{enumerate}
\item  $\{\mathcal R_c,\mathcal R_r, \mathcal R\}\subset Aut(V)$.
\item The transformation matrices of $\mathcal{R}$, $\mathcal R_r$ and $\mathcal R_c,$ are given respectively by  \begin{displaymath} M_\mathcal {R}=(\delta_{i,nm-j+1})_{nm\times nm},\quad M_{\mathcal {R}_r}=(\delta_{i,m-j+1})_{m\times m},\quad M_{\mathcal {R}_c}=(\delta_{n-i+1,j})_{n\times n},\end{displaymath} where $M_\mathcal R$ acts over $A$ written as vector in $\mathbb{K}^{nm}$, $M_{\mathcal R_r}$ acts over $A$ multiplying it by right and $M_{\mathcal R_c}$ acts over $A$ multiplying it by left.
\item $\mathcal R(A)=M_{\mathcal R_c}AM_{\mathcal R_r}$.
\item $M_{\mathcal R_c}M_{\mathcal R_r}=I_n \Leftrightarrow n=m$.
\item $\ker(\mathcal R- id)^{\perp}=\ker(\mathcal R+id) $.
\item $\ker (\mathcal R-id)=\ker (\mathcal R_r-\mathcal R_c)$.
\item $\ker (\mathcal R+id)=\ker (\mathcal R_r+\mathcal R_c)$.
\item $V=\ker(\mathcal R-id)\oplus \ker(\mathcal R+id)=(\ker(\mathcal R_c-id)\cap \ker(\mathcal R_r-id))\oplus (\ker(\mathcal R_c+id)\cap \ker(\mathcal R_r+id))\oplus (\ker(\mathcal R_c+id)\cap \ker(\mathcal R_r-id))\oplus (\ker(\mathcal R_c-id)\cap \ker(\mathcal R_r+id))$.
\item $\dim \ker(\mathcal R-id)=\left\lceil\frac{nm}{2}\right\rceil$, $\dim \ker(\mathcal R+id)=\left\lfloor\frac{nm}{2}\right\rfloor$, $\dim (\ker(\mathcal R_c-id)\cap \ker(\mathcal R_r-id))=\left\lceil\frac{n}{2}\right\rceil\left\lceil\frac{m}{2}\right\rceil$, $\dim (\ker(\mathcal R_c+id)\cap \ker(\mathcal R_r+id))=\left\lfloor\frac{n}{2}\right\rfloor \left\lfloor\frac{m}{2}\right\rfloor$, $\dim (\ker(\mathcal R_c+id)\cap \ker(\mathcal R_r-id))=\left\lfloor\frac{n}{2}\right\rfloor\left\lceil\frac{m}{2}\right\rceil$, $\dim (\ker(\mathcal R_c-id)\cap \ker(\mathcal R_r+id))=\left\lfloor\frac{n}{2}\right\rfloor\left\lceil\frac{m}{2}\right\rceil$.
 \end{enumerate}
\end{proposition}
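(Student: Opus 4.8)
The plan is to treat items 1--4 as direct computations and then derive the structural items 5--9 from the single relation $\mathcal{R}=\mathcal{R}_c\circ\mathcal{R}_r=\mathcal{R}_r\circ\mathcal{R}_c$ together with the fact that $\mathcal{R}_r$, $\mathcal{R}_c$ and $\mathcal{R}$ are commuting involutions. For item 1, linearity of each map is immediate from its entrywise formula, and since $\mathcal{R}_r^2=\mathcal{R}_c^2=\mathcal{R}^2=id$ (each reversing applied twice restores every index), each map is its own inverse and hence lies in $\mathrm{Aut}(V)$, exactly as in item 1 of Proposition \ref{proplint}. For item 2, I would verify the three transformation matrices by computing the relevant matrix product entrywise: $(A\,M_{\mathcal{R}_r})_{ij}=\sum_k a_{ik}\delta_{k,m-j+1}=a_{i,m+1-j}$ recovers $\mathcal{R}_r$, $(M_{\mathcal{R}_c}A)_{ij}=\sum_k \delta_{n-i+1,k}a_{kj}=a_{n+1-i,j}$ recovers $\mathcal{R}_c$, and $M_{\mathcal{R}}$ is simply the size-$nm$ reversal permutation acting on the vectorization $\rho_{nm}(A)$. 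Item 3 then follows by composing the two one-sided actions, $(M_{\mathcal{R}_c}AM_{\mathcal{R}_r})_{ij}=a_{n+1-i,m+1-j}=\mathcal{R}(A)_{ij}$, which simultaneously proves $\mathcal{R}=\mathcal{R}_c\circ\mathcal{R}_r$. For item 4 I observe that the product $M_{\mathcal{R}_c}M_{\mathcal{R}_r}$ of an $n\times n$ by an $m\times m$ matrix is only defined when $n=m$, in which case both factors equal the common reversal matrix and the involution property gives $M_{\mathcal{R}_c}M_{\mathcal{R}_r}=I_n$.

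For items 5--7 I would exploit that $\mathcal{R}_c$ is a linear involution. For item 6, from $\mathcal{R}=\mathcal{R}_c\circ\mathcal{R}_r$ the equation $\mathcal{R}(A)=A$ reads $\mathcal{R}_c(\mathcal{R}_r(A))=A$; applying $\mathcal{R}_c$ and using $\mathcal{R}_c^2=id$ converts this into $\mathcal{R}_r(A)=\mathcal{R}_c(A)$, i.e. $A\in\ker(\mathcal{R}_r-\mathcal{R}_c)$, and the steps reverse, giving the equality of kernels. Item 7 is identical up to a sign, yielding $\mathcal{R}_r(A)=-\mathcal{R}_c(A)$. Item 5 is the matrix analogue of item 4 of Proposition \ref{proplint}: I equip $V$ with the standard bilinear form $\langle A,B\rangle=\sum_{i,j}a_{ij}b_{ij}$, which is preserved by $\mathcal{R}$ because $\mathcal{R}$ merely permutes the $nm$ entries; hence for $\mathcal{R}(A)=A$ and $\mathcal{R}(B)=-B$ one gets $\langle A,B\rangle=\langle \mathcal{R}A,\mathcal{R}B\rangle=-\langle A,B\rangle=0$, and combined with the decomposition of item 8 this yields $\ker(\mathcal{R}-id)^\perp=\ker(\mathcal{R}+id)$.

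For item 8 the first splitting $V=\ker(\mathcal{R}-id)\oplus\ker(\mathcal{R}+id)$ is the $\pm1$ eigenspace decomposition of the involution $\mathcal{R}$, obtained from $\mathcal{R}^2-id=(\mathcal{R}-id)(\mathcal{R}+id)$ as in item 5 of Proposition \ref{proplint}. For the four-fold refinement I would use that $\mathcal{R}_r$ and $\mathcal{R}_c$ commute and introduce the commuting idempotents $P_r^\pm=\tfrac12(id\pm\mathcal{R}_r)$ and $P_c^\pm=\tfrac12(id\pm\mathcal{R}_c)$; the identity $id=(P_c^++P_c^-)(P_r^++P_r^-)$ expands into four orthogonal idempotents $P_c^{\pm}P_r^{\pm}$ whose images are precisely the four intersections listed, so $V$ is their direct sum. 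Because $\mathcal{R}=\mathcal{R}_c\mathcal{R}_r$ acts as the product of eigenvalues on each joint eigenspace, the two blocks with equal signs assemble to $\ker(\mathcal{R}-id)$ and the two with opposite signs to $\ker(\mathcal{R}+id)$, so the four-fold decomposition refines the two-fold one.

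Finally, for item 9 the cleanest route is the identification $\mathcal{M}_{n\times m}\cong\mathbb{K}^n\otimes\mathbb{K}^m$ under which $\mathcal{R}_c=\mathcal{R}^{(n)}\otimes id$ and $\mathcal{R}_r=id\otimes\mathcal{R}^{(m)}$, where $\mathcal{R}^{(n)}$, $\mathcal{R}^{(m)}$ denote reversing on $\mathbb{K}^n$, $\mathbb{K}^m$. Then each joint eigenspace factors as a tensor product of factor eigenspaces whose dimensions $\lceil n/2\rceil,\lfloor n/2\rfloor$ and $\lceil m/2\rceil,\lfloor m/2\rfloor$ are supplied by item 6 of Proposition \ref{proplint}; multiplying gives $\lceil n/2\rceil\lceil m/2\rceil$, $\lfloor n/2\rfloor\lfloor m/2\rfloor$, $\lfloor n/2\rfloor\lceil m/2\rceil$ and $\lceil n/2\rceil\lfloor m/2\rfloor$ for the four blocks respectively (the last being the symmetric computation), while $\dim\ker(\mathcal{R}\mp id)=\lceil nm/2\rceil,\lfloor nm/2\rfloor$ follow by applying the same item 6 to $\mathcal{R}$ as reversing on $\mathbb{K}^{nm}$. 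I expect the main obstacle to be precisely this bookkeeping: setting up the tensor (equivalently, simultaneous block) identification carefully enough that the eigenvalue signs of $\mathcal{R}_c$ and $\mathcal{R}_r$ are matched to the correct floor/ceiling factors, and confirming that the four dimensions sum to $nm$ consistently with the direct sum of item 8.
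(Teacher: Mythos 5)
Your proof is correct, and it is worth saying that where the paper's own ``proof'' of Proposition \ref{proplint2} is a one-line citation (``it follows from Proposition \ref{proplint} adapted for matrices and items 97 to 100''), you actually supply the adaptation: the entrywise verification of the transformation matrices in items 2--3, the observation that $\mathcal R_r$ and $\mathcal R_c$ are \emph{commuting} involutions with $\mathcal R=\mathcal R_c\circ\mathcal R_r$ (which gives items 6 and 7 immediately), and the four commuting idempotents $P_c^{\pm}P_r^{\pm}$ for item 8. The tensor-product identification $\mathcal M_{n\times m}\cong\mathbb K^n\otimes\mathbb K^m$ with $\mathcal R_c=\mathcal R^{(n)}\otimes id$ and $\mathcal R_r=id\otimes\mathcal R^{(m)}$ is not in the paper at all, but it is the cleanest way to get item 9, and it buys you something concrete: your computation gives $\dim\bigl(\ker(\mathcal R_c-id)\cap\ker(\mathcal R_r+id)\bigr)=\left\lceil\frac{n}{2}\right\rceil\left\lfloor\frac{m}{2}\right\rfloor$, whereas the proposition as printed assigns $\left\lfloor\frac{n}{2}\right\rfloor\left\lceil\frac{m}{2}\right\rceil$ to both mixed blocks. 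Your value is the right one --- with the printed values the four dimensions sum to $\left(\left\lceil\frac{n}{2}\right\rceil\left\lceil\frac{m}{2}\right\rceil+\left\lfloor\frac{n}{2}\right\rfloor\left\lfloor\frac{m}{2}\right\rfloor+2\left\lfloor\frac{n}{2}\right\rfloor\left\lceil\frac{m}{2}\right\rceil\right)$, which already fails to equal $nm$ for $n=2$, $m=3$, contradicting item 8 --- so you have in effect found and corrected a typo in the statement (cf.\ the corresponding property 76, $\dim\mathsf W_{pa}=\left\lceil\frac{n}{2}\right\rceil\left\lfloor\frac{m}{2}\right\rfloor$, in Section 1). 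Two minor points: in item 5 you should say explicitly that the equality (rather than mere inclusion) of $\ker(\mathcal R+id)$ in $\ker(\mathcal R-id)^{\perp}$ uses nondegeneracy of the trace form together with the dimension count from item 9; and in item 4 you should state that you are interpreting the equivalence as ``the product is defined and equals $I_n$,'' since the forward implication is vacuous otherwise.
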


\begin{proof}
It follows from Proposition \ref{proplint} adapted for matrices and previous (above) items 97 to 100.
\end{proof}
\begin{remark} In \cite{acchro2} were defined and studied the subspaces associated with items 3, 4 and 5 of Proposition \ref{proplint2}. We see that for square matrices $M_{\mathcal R_c}=M_{\mathcal{R}_r}$, although they are acting in different ways (left for columns and right for rows). From now on for square matrices we write $M_\mathcal R$.

\end{remark}
The following result, comes from the previous definitions.
\begin{proposition}\label{lem12}
	The following statements hold.
	\begin{enumerate}
		\item Consider $A\in M_{n\times m}$ and $B\in M_{n\times r}$, then $(\mathcal P_r(A,B))^T=\mathcal P_c(A^T,B^T).$
		\item  Consider $A\in M_{n\times n}$, then $\mathcal R(Adj A)=Adj(\mathcal R(A))$.
		\item Consider $A'$ as the augmented matrix of $A$ with $b$, then $A'=\mathcal P_r(A,b)$.
	\end{enumerate}
\end{proposition}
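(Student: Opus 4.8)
The plan is to treat the three items in increasing order of difficulty: items~1 and~3 are essentially definitional, while item~2 carries the real content. For item~3 I would simply unwind the definition: viewing $b$ as a single column, that is $b\in\mathcal M_{n\times 1}$, the map $\mathcal P_r$ glues $A$ and $b$ side by side, and by item~29 the result lies in $\mathcal M_{n\times(m+1)}$; this is exactly the augmented matrix $[\,A\mid b\,]=A'$, so nothing beyond the definition of $\mathcal P_r$ is needed. For item~1, $\mathcal P_r(A,B)=[\,A\mid B\,]\in\mathcal M_{n\times(m+r)}$ while $\mathcal P_c(A^T,B^T)=\begin{pmatrix}A^T\\ B^T\end{pmatrix}\in\mathcal M_{(m+r)\times n}$, and the claim is precisely the transpose rule $[\,A\mid B\,]^T=\begin{pmatrix}A^T\\ B^T\end{pmatrix}$ for partitioned matrices; this is item~46 rewritten in the new notation, and a fully formal version just compares $(i,j)$-entries in the two ranges $j\le m$ and $j>m$.

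For item~2 I would prove $\mathcal R(\mathrm{Adj}\,A)=\mathrm{Adj}(\mathcal R(A))$ entrywise. Writing $(\mathrm{Adj}\,A)_{ij}=(-1)^{i+j}M_{ji}(A)$, where $M_{ji}(A)$ is the minor obtained from $A$ by deleting row $j$ and column $i$, and using $(-1)^{(n+1-i)+(n+1-j)}=(-1)^{i+j}$, I obtain
\[
\big(\mathcal R(\mathrm{Adj}\,A)\big)_{ij}=(\mathrm{Adj}\,A)_{\,n+1-i,\,n+1-j}=(-1)^{i+j}\,M_{\,n+1-j,\,n+1-i}(A),
\]
whereas $\big(\mathrm{Adj}(\mathcal R A)\big)_{ij}=(-1)^{i+j}M_{ji}(\mathcal R A)$. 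Thus the whole statement reduces to the single minor identity $M_{ji}(\mathcal R A)=M_{\,n+1-j,\,n+1-i}(A)$.

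Establishing that minor identity is the key step and the main obstacle. My plan is to track indices carefully: deleting row $j$ and column $i$ from $\mathcal R(A)$ leaves the entries $a_{(n+1-k)(n+1-l)}$ with $k\neq j$ and $l\neq i$, and reindexing shows that the surviving row labels $n+1-k$ run through $\{1,\dots,n\}\setminus\{n+1-j\}$ in \emph{reversed} order, and likewise for the columns. Hence this $(n-1)\times(n-1)$ submatrix is exactly $\mathcal R$ applied to the submatrix of $A$ obtained by deleting row $n+1-j$ and column $n+1-i$. Invoking item~102 at size $n-1$ (Reversing preserves the determinant) gives $M_{ji}(\mathcal R A)=M_{\,n+1-j,\,n+1-i}(A)$, so the two displayed expressions coincide. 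Because this argument is purely combinatorial, it holds for every $A\in\mathcal M_{n\times n}(\mathbb K)$, singular or not.

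As a cross-check, and a faster route on the invertible locus, I would note that for invertible $A$ one has $\mathrm{Adj}\,A=\det(A)\,A^{-1}$, so by linearity (item~87), item~101 and item~102,
\[
\mathcal R(\mathrm{Adj}\,A)=\det(A)\,(\mathcal R(A))^{-1}=\det(\mathcal R(A))\,(\mathcal R(A))^{-1}=\mathrm{Adj}(\mathcal R(A)).
\]
Since both sides are polynomial in the entries of $A$ and the invertible matrices are Zariski dense in $\mathcal M_{n\times n}(\mathbb K)$ (the field has characteristic zero, hence is infinite), this already forces equality everywhere, providing an independent confirmation of the combinatorial argument. I expect the index-reversal bookkeeping in the minor identity to be the only delicate point; everything else is routine.
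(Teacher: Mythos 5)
Your proofs of items 1 and 3 are correct and essentially match the paper's: the paper also reduces item 1 to a row-by-row transpose computation (phrased through the maps $\varphi_1,\varphi_2$ and $S$ of Theorem \ref{teor1}) and treats item 3 as a direct unwinding of the definition. For item 2, however, you take a genuinely different and in fact stronger route. The paper's own argument is exactly your ``cross-check'': it writes $\mathrm{Adj}\,A=\det(A)\,A^{-1}$ and chains properties 101 and 102, which presupposes $A$ invertible and is stated without any extension to the singular case. Your entrywise argument via the minor identity $M_{ji}(\mathcal R A)=M_{\,n+1-j,\,n+1-i}(A)$ --- obtained by observing that deleting row $j$ and column $i$ from $\mathcal R(A)$ yields $\mathcal R$ applied to the complementary submatrix of $A$, then invoking the determinant-invariance of Reversing at size $n-1$ --- is purely combinatorial and valid for every $A$, and your Zariski-density remark gives a second, independent way to close the singular case from the invertible one. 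What the paper's route buys is brevity and direct reuse of the listed properties; what yours buys is full generality and an explicit identification of where the content lies (the index-reversal of minors). Both are sound on their common domain, but your version actually proves the proposition as stated, for all square matrices.
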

\begin{proof}
We proceed according to each item.
\begin{enumerate}
	\item 	Assume $A\in M_{n\times m}$ and $B\in M_{n\times p}$ then  
	$$(\mathcal P_r(A,B))^T=(\mathcal P(a_i,b_i))^T\quad\textrm{where}\quad a_i\in K^m,\,b_i\in K^p.$$
	Now, by Theorem \ref{teor1} we have
	\begin{equation*}\begin{split}(\mathcal P(a_i,b_i))^T&=((S\circ\varphi)(a_i,b_i))^T \\
	&=(S(\varphi_1(a_i),\varphi_2(b_i)))^T\\
	&=(\varphi_1(a_i)+\varphi_2(b_i))^T. \end{split} 
	\end{equation*}
	Therefore, by properties of transpose matrices we arrive to 
	$$(\varphi_1(a_i)+\varphi_2(b_i))^T =\varphi_1(a_i)^T+\varphi_2(b_i)^T $$
	and for instance
	\begin{equation*}\begin{split}\varphi_1(a_i)^T+\varphi_2(b_i)^T &=S(\varphi_1(a_i)^T,\varphi_2(b_i)^T )\\
	(S\circ\varphi)(a_i^T,b_i^T)&=\mathcal P(a_i^T,b_i^T).\end{split}\end{equation*}
	Therefore 
	$$(\mathcal P_r(A,B))^T=\mathcal P_c(A^T,B^T).$$ 
	\item Due to $Adj A=|A|A^{-1},$ we have that $Adj(\mathcal R(A))=|\mathcal R(A)|(\mathcal R(A))^{-1}$. Due to property 102 we get $|\mathcal R(A)|(\mathcal R(A))^{-1}=|A|(\mathcal R(A))^{-1}$. Now, by property 101 $|A|(\mathcal R(A))^{-1}=|A|\mathcal R(A^{-1})=\mathcal R(|A|A^{-1}),$ thus $\mathcal R(Adj A)=Adj(\mathcal R(A))$
	\item We see that $$A'=\left[\begin{array}{@{}ccc|c@{}}a_{11}&\cdots&a_{1m}&b_1\\\vdots&\ddots&\vdots&\vdots\\a_{n1}&\cdots&a_{nm}&b_n\end{array}\right].$$ Moreover, $\varphi_1(A)\in M_{n\times(m+1)}$ and $\varphi_2(A)\in M_{n\times(m+1)}$. In this way $A'=\varphi_1(A)+\varphi_2(A)=S(\varphi_1(A),\varphi_2(A)),$ for instance $(S\circ\varphi)(A,b)=\mathcal P_r(A,b),$ así $A'=\mathcal P_r(A,b)$
\end{enumerate}
\end{proof}
\begin{remark} We note that item 2 is not true for reversing by rows and columns due to properties 49, 50, 51 and 52.
\end{remark}

Considering the vector space of $n\times m$ matrices, we say that a mapping is \textit{palindromicing by rows} (resp. by \textit{columns}) whether it transform any $n\times m$ matrix into a palindromic matrix by rows (resp. by columns), i.e., it is epimorphism from $\mathcal M_{n\times m}$ to $W_p^r(n\times m)$ (resp. $W_p^c(n\times m)$). In a similar way, we say that a mapping is \textit{antipalindromicing by rows} (resp. by \textit{columns}) whether it transform any $n\times m$ matrix into an antipalindromic matrix by rows (resp. by columns), i.e., it is epimorphism from $\mathcal M_{n\times m}$ to $W_a^r(n\times m)$ (resp. $W_a^c(n\times m)$). Moreover, we say that palindromicing and antipalindromicing mappings by rows (resp. by columns) are \textit{canonical},  denoted by $\mathcal F_p^r$ and $\mathcal F_a^r$ (resp. $\mathcal F_p^c$ and $\mathcal F_a^c$) respectively, whether they are linear mappings and for all $A\in \mathcal M_{n\times m}$ they satisfy $A=(\mathcal F_p^r+\mathcal F_a^r)(A)=(\mathcal F_p^c+\mathcal F_a^c)(A)$, $\mathcal R_r(A)=(\mathcal F_p^r-\mathcal F_a^r)(A)$ and $\mathcal{R}_c=(\mathcal F_p^c-\mathcal F_a^c)(A)$. From now on we only consider canonical palindromicing and antipalindromicing mappings by rows (resp. by columns), which will be called \textit{Palindromicing} and \textit{Antipalindromicing} mappings by rows (resp. by columns). Nevertheless, we can consider Palindromicing and Antipalindromicing mappings for matrices with respect $\mathcal{R}=\mathcal{R}_r\circ \mathcal{R}_c$, denoted as $\mathcal{F}_p$ and $\mathcal{F}_a$ respectively. The following proposition, which is an extension of Proposition \ref{proppalant} for matrices, give us the characterization of Palindromicing and Antipalindromicing mappings for matrices.
\begin{proposition}[Palindromicing \& Antipalindromicing mappings in $\mathcal{M}_{n\times m}$]\label{proppalantmat}
The following statements hold.
\begin{enumerate}
\item $\mathcal F_p^r$, $\mathcal F_a^r$, $\mathcal F_p^c$, $\mathcal F_a^c$, $\mathcal F_p$ and $\mathcal F_a$ are given by
\begin{displaymath}\begin{array}{llrllllrll}
 \mathcal F_p^r&: &\mathcal{M}_{n\times m}&\rightarrow&W_p^r(n\times m)&& \mathcal F_a^r&: \mathcal{M}_{n\times m}&\rightarrow&W_a^r(n\times m) \\ &&&&&,&&&&\\&& A&\mapsto&\frac12(A+\mathcal R_r(A))&&& A&\mapsto&\frac12(A-\mathcal R_r(A))\end{array}.
 \end{displaymath} 
 
\begin{displaymath}\begin{array}{llrllllrll}
 \mathcal F_p^c&: &\mathcal{M}_{n\times m}&\rightarrow&W_p^c(n\times m)&& \mathcal F_a^c&: \mathcal{M}_{n\times m}&\rightarrow&W_a^c(n\times m) \\ &&&&&,&&&&\\&& A&\mapsto&\frac12(A+\mathcal R_c(A))&&& A&\mapsto&\frac12(A-\mathcal R_c(A))\end{array}.
 \end{displaymath} 

\begin{displaymath}\begin{array}{llrllllrll}
 \mathcal F_p&: &\mathcal{M}_{n\times m}&\rightarrow&\mathbf{PA}(n\times m)&& \mathcal F_a&: \mathcal{M}_{n\times m}&\rightarrow&\mathbf{aPA}(n\times m) \\ &&&&&,&&&&\\&& A&\mapsto&\frac12(A+\mathcal R(A))&&& A&\mapsto&\frac12(A-\mathcal R(A))\end{array}.
 \end{displaymath} 

\item $\ker(\mathcal F_p^r)=\mathrm{Im}(\mathcal{F}_a^r)$, $\ker(\mathcal F_a^r)=\mathrm{Im}(\mathcal{F}_p^r)$, $\ker(\mathcal F_p^c)=\mathrm{Im}(\mathcal{F}_a^c)$, $\ker(\mathcal F_a^c)=\mathrm{Im}(\mathcal{F}_p^c)$, $\ker(\mathcal F_p)=\mathrm{Im}(\mathcal{F}_a)$, $\ker(\mathcal F_a)=\mathrm{Im}(\mathcal{F}_p)$.
\item $\mathcal F_p^r(A)=A(M_\mathcal{R}+I_n)$, $\mathcal F_a^r(A)=A(M_\mathcal{R}-I_n)$, $\mathcal F_p^c(A)=(M_\mathcal{R}+I_m)A$, $\mathcal F_a^c(A)=(M_\mathcal{R}-I_m)A$, $\mathcal F_p(A)=M_\mathcal RAM_\mathcal{R}+A$ and $\mathcal F_a(A)=M_\mathcal RAM_\mathcal{R}-A$.
\end{enumerate}
\end{proposition}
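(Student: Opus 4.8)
The plan is to mirror the proof of Proposition \ref{proppalant} almost verbatim, handling the three pairs of mappings separately but by identical arguments, and feeding in the matrix-level facts already recorded in Proposition \ref{proplint2} together with the idempotency properties 21, 22 and 84. The only genuine input is that each reversing operator squares to the identity, i.e. $\mathcal R_r^2=\mathcal R_c^2=\mathcal R^2=\mathrm{id}$, so I would record this at the outset and then reuse it mechanically.

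For item 1, linearity of all six maps is immediate, since each is a linear combination of $\mathrm{id}$ and one of the linear operators $\mathcal R_r,\mathcal R_c,\mathcal R$. To see that $\mathcal F_p^r$ lands in $W_p^r$ I would apply $\mathcal R_r$ to $\tfrac12(A+\mathcal R_r(A))$ and use $\mathcal R_r^2=\mathrm{id}$ to recover the same matrix, so the output is fixed by $\mathcal R_r$ and hence palindromic by rows; the antipalindromic computation produces a sign flip instead. The canonical identities $(\mathcal F_p^r+\mathcal F_a^r)(A)=A$ and $(\mathcal F_p^r-\mathcal F_a^r)(A)=\mathcal R_r(A)$ follow by adding and subtracting the two defining formulas. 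Surjectivity onto $W_p^r$ is the observation that any palindromic-by-rows $P$ satisfies $\mathcal F_p^r(P)=P$, giving $W_p^r\subseteq\mathrm{Im}(\mathcal F_p^r)$, while the reverse inclusion was just shown. The column and full-reversing cases repeat this word for word, replacing $\mathcal R_r$ by $\mathcal R_c$ (property 22) or by $\mathcal R$ (property 84).

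For item 2, I would argue exactly as in Proposition \ref{proppalant}(2): if $A\in\ker\mathcal F_p^r$ then $\mathcal R_r(A)=-A$, so $A$ is antipalindromic by rows and $\mathcal F_a^r(A)=A$, placing $A\in\mathrm{Im}(\mathcal F_a^r)$; conversely every antipalindromic-by-rows matrix is annihilated by $\mathcal F_p^r$. This yields $\ker(\mathcal F_p^r)=W_a^r=\mathrm{Im}(\mathcal F_a^r)$, and symmetrically $\ker(\mathcal F_a^r)=W_p^r=\mathrm{Im}(\mathcal F_p^r)$; the four remaining equalities are the same statement for columns and for $\mathcal R$.

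For item 3, I would substitute the matrix descriptions from Proposition \ref{proplint2}: $\mathcal R_r$ acts by right multiplication by $M_{\mathcal R_r}$, $\mathcal R_c$ by left multiplication by $M_{\mathcal R_c}$, and $\mathcal R(A)=M_{\mathcal R_c}AM_{\mathcal R_r}$ (property 83). Plugging these into the definitions gives $\mathcal F_p^r(A)=\tfrac12 A(M_{\mathcal R_r}+I)$, $\mathcal F_p^c(A)=\tfrac12(M_{\mathcal R_c}+I)A$ and $\mathcal F_p(A)=\tfrac12(M_{\mathcal R_c}AM_{\mathcal R_r}+A)$, with the three antipalindromic versions carrying a minus sign; these are the asserted companion matrices once the scalar $\tfrac12$ is suppressed exactly as in Proposition \ref{proppalant}(3). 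The only point requiring care — and the closest thing to an obstacle — is the bookkeeping of the two distinct representing matrices and the side on which each acts: because rows are reversed by right multiplication and columns by left multiplication, one must not conflate $M_{\mathcal R_r}$ and $M_{\mathcal R_c}$ (nor the corresponding identity blocks $I_m$ and $I_n$) except in the square case, where the remark following Proposition \ref{proplint2} licenses writing both as $M_{\mathcal R}$.
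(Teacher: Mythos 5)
Your proposal is correct and follows essentially the same route as the paper: the paper's own proof is a one-line reduction ("view $A$ as an array of row vectors, of column vectors, or as a vector in $\mathbb{K}^{nm}$ and invoke Proposition \ref{proppalant}"), and your argument simply carries out that same vector-level computation explicitly at the matrix level using $\mathcal R_r^2=\mathcal R_c^2=\mathcal R^2=\mathrm{id}$. Your remark in item 3 about not conflating $M_{\mathcal R_r}$ with $M_{\mathcal R_c}$ and $I_m$ with $I_n$ is in fact more careful than the statement as printed, which interchanges the identity sizes for the row and column cases.
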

\begin{proof}
We can see any matrix $A\in \mathcal{M}_{n\times m}$ as an array of $n$ row vectors belonging to $\mathbb{K}^m$ or an array of $m$ column vectors belonging to $\mathbb{K}^n$ or a vector belonging to $\mathbb{K}^{nm}$. Thus, the results are obtained in virtue of Proposition \ref{proppalant}.
\end{proof}

From basic linear algebra we know that the main diagonal  and trace are linear mappings, that is $\mathrm{diag}(A+B)=\mathrm{diag}(A)+\mathrm{diag}(B)$ and $\mathrm{diag}(c\cdot A)=c\cdot \mathrm{diag}(A)$, as well $Tr(A+B)=Tr(A)+Tr(B)$ and $Tr(c\cdot A)=c\cdot Tr(A)$. Thus, we arrive to the following elementary result.

\begin{lemma}\label{l1} Let $D$ be a diagonal matrix and $A=\begin{pmatrix}a_{ij}\end{pmatrix}_{n\times n}$, the following statements hold.
\begin{enumerate}
\item $\mathcal{R}(\mathrm{diag}(A))=\mathrm{diag}({\mathcal{R}(A)})$
\item $Tr(A)=Tr(\mathcal{R}(A))$
\item $\mathcal{R}_{c}(D)=\mathcal{R}_{r}(D)\Leftrightarrow \mathrm{diag}(D)=\mathrm{diag}(\mathcal{R}(D))$
\item $\mathcal{R}(\mathrm{diag}(D))=\mathrm{diag}(D)\Leftrightarrow \mathcal{R}D=D$
\end{enumerate}
\end{lemma}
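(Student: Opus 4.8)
The plan is to reduce everything to entrywise index computations using the three formulas $\mathcal{R}(A)_{ij}=a_{(n+1-i)(n+1-j)}$, $\mathcal{R}_r(A)_{ij}=a_{i(n+1-j)}$ and $\mathcal{R}_c(A)_{ij}=a_{(n+1-i)j}$ from the definition (specialized to the square case $m=n$), together with the fact that $\mathcal{R}$ applied to a vector is ordinary Reversing. The structural observation I will exploit repeatedly is how each of these operations treats the main diagonal $\{(i,i)\}$ and the antidiagonal $\{(i,n+1-i)\}$; once this is recorded, all four items fall out.

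First I would prove item 1. Reading off the diagonal of $\mathcal{R}(A)$ gives $\mathcal{R}(A)_{ii}=a_{(n+1-i)(n+1-i)}$, so $\mathrm{diag}(\mathcal{R}(A))=(a_{nn},a_{(n-1)(n-1)},\ldots,a_{11})$ as a vector. On the other hand $\mathrm{diag}(A)=(a_{11},\ldots,a_{nn})$, and applying vector Reversing yields precisely $(a_{nn},\ldots,a_{11})$. Hence the two sides coincide. Item 2 then follows at once, since the trace is the sum of the diagonal entries and Reversing merely permutes the components of a vector without changing their sum; alternatively it is exactly property $103$.

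For item 3 I would use that $D$ is diagonal, so its only nonzero entry in row $i$ sits at column $i$ with value $d_i:=D_{ii}$. Applying the formulas shows that both $\mathcal{R}_r(D)$ and $\mathcal{R}_c(D)$ are \emph{antidiagonal}: one computes $\mathcal{R}_r(D)_{i,n+1-i}=d_i$ and $\mathcal{R}_c(D)_{i,n+1-i}=d_{n+1-i}$, all other entries vanishing. Thus $\mathcal{R}_r(D)=\mathcal{R}_c(D)$ holds if and only if $d_i=d_{n+1-i}$ for every $i$, i.e. the vector $(d_1,\ldots,d_n)$ is palindromic. By item 1 this last condition is exactly $\mathrm{diag}(D)=\mathcal{R}(\mathrm{diag}(D))=\mathrm{diag}(\mathcal{R}(D))$, giving the stated equivalence.

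Finally, for item 4 the crucial point is that Reversing preserves diagonality: since $D_{(n+1-i)(n+1-j)}$ vanishes unless $i=j$, one gets $\mathcal{R}(D)_{ij}=0$ for $i\neq j$ and $\mathcal{R}(D)_{ii}=d_{n+1-i}$, so $\mathcal{R}(D)=\mathrm{diag}(d_n,\ldots,d_1)$ is again diagonal. Consequently the matrix equation $\mathcal{R}(D)=D$ is equivalent to the scalar equations $d_{n+1-i}=d_i$ for all $i$, which is precisely the palindromicity of $(d_1,\ldots,d_n)$, i.e. $\mathcal{R}(\mathrm{diag}(D))=\mathrm{diag}(D)$. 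The only thing to watch throughout is the index bookkeeping and keeping the two readings of $\mathcal{R}$ (on vectors versus on matrices) straight; there is no genuine obstacle, since diagonal matrices are rigid enough that each operation collapses to a simple permutation of the scalars $d_i$.
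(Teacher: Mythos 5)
Your proof is correct and follows essentially the same elementary route as the paper: reduce each claim to the observation that $\mathcal R$, $\mathcal R_r$, $\mathcal R_c$ permute entries by explicit index maps, so that on a diagonal matrix everything collapses to the palindromicity of the vector $(d_1,\ldots,d_n)$. The only cosmetic differences are that the paper phrases item 1 via the conjugation formula $\mathcal R(A)=M_{\mathcal R_c}AM_{\mathcal R_r}$ rather than raw indices, and derives item 4 by passing through item 3, whereas you argue it directly; both variants are sound.
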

\begin{proof}We proceed according to each item.
\begin{enumerate}
\item By Proposition \ref{proplint2} we see that $\mathcal R(\mathrm{diag}(A))=M_{\mathcal R}\mathrm{diag}(A)M_\mathcal R$, for instance $\mathcal R(\mathrm{diag}(A))=\mathrm{diag}(M_{\mathcal R}AM_\mathcal R)=\mathrm{diag}(\mathcal R(A))$.
    \item It follows from previous item.
    \item Consider $$D=\begin{pmatrix}d_{11}&0&\cdots &0\\0&d_{22}&\cdots&0\\\vdots& \ddots&\vdots\\0&0&\cdots&d_{nn}\end{pmatrix}.$$ By definition of Reversing by columns and rows we have $$\mathcal{R}_c(D)=\begin{pmatrix}0&0&\cdots&0&d_{nn}\\\vdots&\vdots&\ddots&\vdots&\vdots
    \\d_{11}&0&\cdots&0&0\end{pmatrix}$$ and $$\mathcal{R}_r(D)=\begin{pmatrix}0&0&\cdots&0&d_{11}\\\vdots&\vdots&\ddots&\vdots&\vdots\\d_{nn}&0&\cdots&0&0\end{pmatrix}.$$ Owing to $\mathcal{R}_c(D)=\mathcal{R}_r(D)$ we have that $d_{kk}=d_{(n-k+1)(n-k+1)}$ then $\mathrm{diag}(D)=(d_{11},d _{22},\ldots,d_{22},d_{11})=\mathrm{diag}(\mathcal{R}(D))$. Conversely, consider $$D=\begin{pmatrix}d_{11}&0&\cdots &0\\0&d_{22}&\cdots&0\\\vdots& \ddots&\vdots\\0&0&\cdots&d_{nn}\end{pmatrix},$$ due to $\mathrm{diag}(D)=\mathrm{diag}(\mathcal{R}(D))$, then $\mathrm{diag}(D)=(d_{11},d_{22},\ldots,d_{22},d_{11})$. Now, applying $\mathcal{R}_c\,\,\textrm{and}\,\,\mathcal{R}_r$ over $D$, we have
     $$\mathcal{R}_c(D)=\begin{pmatrix}0&0&\cdots&0&d_{11}\\\vdots&\vdots&\ddots&\vdots&\vdots\\
     d_{11}&0&\cdots&0&0\end{pmatrix}=\begin{pmatrix}0&0&\cdots&0&d_{11}\\\vdots&\vdots&\ddots&\vdots&\vdots
     \\d_{11}&0&\cdots&0&0\end{pmatrix}=\mathcal{R}_rD.$$
     \item By item $1$ we have that $\mathcal{R}(\mathrm{diag}(D))=\mathrm{diag}(\mathcal{R}(D))$ and by hypothesis we have that $\mathrm{diag}(\mathcal{R}(D))=\mathrm{diag}(D)$. Now, by item $3$ we obtain that $\mathcal{R}_c(D)=\mathcal{R}_r(D)$ and applying  $\mathcal{R}_c$ we get $\mathcal{R}_c^2(D)=\mathcal{R}(D)$. Therefore $D=\mathcal{R}(D)$. Conversely,
due to $\mathcal{R}=\mathcal{R}_c\circ\mathcal{R}_r$ then $\mathcal{R}_c(\mathcal{R}_r(D))=D$. Applying $\mathcal{R}_c$ we have $\mathcal{R}_r(D)=\mathcal{R}_c(D)$ and items $3$ and $1$ lead us to $\mathrm{diag}(D)=\mathrm{diag}(\mathcal{R}(D))=\mathcal{R}(\mathrm{diag}(D))$.
\end{enumerate}
\end{proof}
\begin{remark} In item $3$ of Lemma \ref{l1} we can see that the main diagonal of $D$ is palindromic, while in item $4$ we observe an equivalence between the palindromic of the diagonal matrix with the palindromic of the main diagonal of the matrix.
\end{remark}

In this way we obtain the following results.

\begin{theorem}\label{t1} Let $p$, $q$, $r$ and $s$ be the characteristic polynomials of $n\times n$ matrices  $A$, $\mathcal R(A)$, $\mathcal{R}_c(A)$ and $\mathcal R_r(A)$ respectively. The following statements hold.
\begin{enumerate}
\item $\mathrm{diag}(\mathcal{R}(A)-\lambda I)=\mathcal{R}(\mathrm{diag}(A-\lambda I))$
\item $p=q$
\item $r=s$
\item $p(A)=p(\mathcal{R}(A))$
\item $ p(\mathcal{R}_r (A))=p(\mathcal{R}_c (A))$
\end{enumerate}
\end{theorem}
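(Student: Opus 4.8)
The plan is to treat the five items as a cascade: items~1 and~2 record how $\mathcal R$ interacts with the shift $A\mapsto A-\lambda I$, item~3 is a similarity statement, and items~4 and~5 are Cayley--Hamilton consequences of the first part. Throughout I would use two structural facts available from Proposition~\ref{proplint2}: for square $A$ one has $\mathcal R(A)=M_\mathcal R A M_\mathcal R$ with $M_{\mathcal R_c}=M_{\mathcal R_r}=M_\mathcal R$, and $M_\mathcal R^{2}=I_n$ (items~3--4 for $n=m$, which also follows from $\mathcal R(I_n)=M_\mathcal R^2=I_n$ via property~97).

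First I would prove item~1 by applying item~1 of Lemma~\ref{l1} to $A-\lambda I$ in place of $A$, giving $\mathcal R(\mathrm{diag}(A-\lambda I))=\mathrm{diag}(\mathcal R(A-\lambda I))$. Since $\mathcal R$ is linear on same-size matrices (property~87) and $\mathcal R(I_n)=I_n$ (property~97), we have $\mathcal R(A-\lambda I)=\mathcal R(A)-\lambda I$, so the right-hand side is $\mathrm{diag}(\mathcal R(A)-\lambda I)$, which is exactly item~1. For item~2 I would write $q(\lambda)=\det(\mathcal R(A)-\lambda I)=\det(\mathcal R(A-\lambda I))$ using the same two facts, then invoke property~102 to get $\det(\mathcal R(A-\lambda I))=\det(A-\lambda I)=p(\lambda)$; equivalently, $\mathcal R(A)=M_\mathcal R A M_\mathcal R^{-1}$ shows $\mathcal R(A)$ is similar to $A$, forcing $p=q$ directly.

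Item~3 is cleanest through this similarity viewpoint: for square $A$ we have $\mathcal R_c(A)=M_\mathcal R A$ and $\mathcal R_r(A)=A M_\mathcal R$, and since $M_\mathcal R^{-1}=M_\mathcal R$ one computes $\mathcal R_c(A)=M_\mathcal R\bigl(A M_\mathcal R\bigr)M_\mathcal R^{-1}=M_\mathcal R\,\mathcal R_r(A)\,M_\mathcal R^{-1}$, so $\mathcal R_c(A)$ and $\mathcal R_r(A)$ are conjugate and $r=s$. Item~4 then follows from Cayley--Hamilton: $p(A)=0$ directly, while item~2 identifies $p=q$ as the characteristic polynomial of $\mathcal R(A)$, so $p(\mathcal R(A))=q(\mathcal R(A))=0$; both sides vanish and the asserted equality holds.

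The main obstacle is item~5. The same conjugation $\mathcal R_c(A)=M_\mathcal R\,\mathcal R_r(A)\,M_\mathcal R^{-1}$ makes $X\mapsto M_\mathcal R X M_\mathcal R^{-1}$ an algebra homomorphism, so evaluating the fixed polynomial $p$ gives $p(\mathcal R_c(A))=M_\mathcal R\,p(\mathcal R_r(A))\,M_\mathcal R^{-1}=\mathcal R\bigl(p(\mathcal R_r(A))\bigr)$. This shows the two matrices agree \emph{up to Reversing}, but not that they are literally equal, since genuine equality would require $p(\mathcal R_r(A))$ to be palindromic ($\mathcal R$-invariant), which does not hold in general. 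The defensible statement I would actually prove is therefore that $p(\mathcal R_r(A))$ and $p(\mathcal R_c(A))$ have the same characteristic polynomial, trace and determinant, all immediate from the displayed relation together with properties~102,~103 and item~2; closing the gap to honest matrix equality is exactly the step I expect to fail without an extra palindromicity hypothesis.
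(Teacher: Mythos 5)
Your handling of items 1--4 is correct but follows a genuinely different route from the paper's. For item 2 the paper writes out the entries of $A-\lambda I$ and $\mathcal R(A)-\lambda I$ explicitly before invoking property 102, and for item 3 it inserts $\mathcal R_c^2$ and tracks the sign $(-1)^{\lfloor n/2\rfloor}$ through properties 49--50, showing that $r(\lambda)$ and $s(\lambda)$ both equal $(-1)^{\lfloor n/2\rfloor}\det\bigl(A-\lambda\,\mathcal R_c(I)\bigr)$. Your observation that $\mathcal R(A)=M_\mathcal R A M_\mathcal R^{-1}$ and $\mathcal R_c(A)=M_\mathcal R\,\mathcal R_r(A)\,M_\mathcal R^{-1}$ are similarities reaches the same conclusions with no sign bookkeeping, and is the cleaner argument; your reading of item 4 (both sides are the zero matrix by Cayley--Hamilton, since $p=q$ is the characteristic polynomial of both $A$ and $\mathcal R(A)$) matches the paper's one-line justification.

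On item 5 you have identified a genuine problem, and your refusal to close the gap is justified: the statement is false as written. The paper's proof says only that items 4 and 5 follow from Cayley--Hamilton, but that argument requires $p$ to annihilate the matrices being substituted, and $p$ is the characteristic polynomial of $A$, not of $\mathcal R_r(A)$ or $\mathcal R_c(A)$ (their common characteristic polynomial is $r=s$, which differs from $p$ in general). Concretely, take $A=\begin{pmatrix}1&0\\0&2\end{pmatrix}$, so $p(\lambda)=\lambda^2-3\lambda+2$, $\mathcal R_r(A)=\begin{pmatrix}0&1\\2&0\end{pmatrix}$ and $\mathcal R_c(A)=\begin{pmatrix}0&2\\1&0\end{pmatrix}$; both of these square to $2I$, so $p(\mathcal R_r(A))=4I-3\,\mathcal R_r(A)=\begin{pmatrix}4&-3\\-6&4\end{pmatrix}$ while $p(\mathcal R_c(A))=\begin{pmatrix}4&-6\\-3&4\end{pmatrix}$. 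These are unequal, and each is the image of the other under $\mathcal R$, exactly as your identity $p(\mathcal R_c(A))=\mathcal R\bigl(p(\mathcal R_r(A))\bigr)$ predicts. The correct statements in the neighbourhood of item 5 are your conjugation identity, or $s(\mathcal R_r(A))=r(\mathcal R_c(A))=\mathbf 0$, which is presumably what the authors intended; literal equality would require the additional hypothesis that $p(\mathcal R_r(A))$ be palindromic.
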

\begin{proof} We proceed according to each item.
\begin{enumerate}
\item Due to $\mathrm{diag}$ is a linear mapping and $\lambda I$ is a palindromic matrix, we obtain $\mathrm{diag}(\mathcal{R}(A)-\lambda I)=\mathrm{diag}(\mathcal{R}(A))-\mathrm{diag}(\mathcal{R}(\lambda I))$. By item $1$ in Lemma \ref{l1}, we conclude $\mathrm{diag}(\mathcal{R}(A)-\lambda I)=\mathcal{R}(\mathrm{diag}(A-\lambda I))$.
\item Consider $A=(a_{ij})_{n\times n}$, thus $A-\lambda I=(b_{ij})$, where
$$b_{ij}=\left\{
\begin{array}{lr}
 a_{ij}-\lambda,& i=j\\
 a_{ij},&i\neq j
\end{array}
\right.$$
Now $\mathcal{R}(A)-\lambda I=(c_{ij})$ where
$$c_{ij}=\left\{
\begin{array}{lr}
 a_{(n-i+1)(n-j+1)}-\lambda,& n-i+1=n-j+1\\
 a_{(n-i+1)(n-j+1)},&n-i+1\neq n-j+1
\end{array}
\right.$$
This lead us to
$$c_{ij}=\left\{
\begin{array}{lr}
 a_{(n-i+1)(n-j+1)}-\lambda,& i=j\\
 a_{(n-i+1)(n-j+1)},&i\neq j
\end{array}
\right.$$
Due to $\mathcal{R}(A-\lambda I)=\mathcal{R}(A)-\lambda I$,  in virtue of item $102$ in Section $2$, we have
$\det(\mathcal{R}(A-\lambda I))=\det(\mathcal{R}(A)-\lambda I))$, which implies that $p=q$.

\item Owing to $\mathcal R_c^2=I$ we can write $\det(\mathcal R_c A-\lambda I)$ as  $$\det(\mathcal R_c A-\lambda\mathcal R^2_c I)=\det(\mathcal R_c(A-\lambda\mathcal R_cI))=(-1)^{\lfloor\frac{n}{2}\rfloor}\det(A-\lambda\mathcal R_c I).$$ Due to $\mathcal R_cI=\mathcal R_rI$ and $[\delta_{i,n-j+1}]_{n\times n}\cdot I=I\cdot[\delta_{i,n-j+1}]_{n\times n}$, we obtain $$(-1)^{\lfloor\frac{n}{2}\rfloor}\det(A-\lambda\mathcal R_c I)=(-1)^{\lfloor\frac{n}{2}\rfloor}\det(A-\lambda\mathcal R_r I)=\det(\mathcal R_rA-\lambda I),$$ which implies  $r=s$.
\end{enumerate}
Items 4 and 5 correspond to the application of Cayley - Hamilton Theorem for Reversing, followed by items 1 and 2.
\end{proof}
\begin{remark} Analysis of Reversing to characteristic polynomials can be done exactly as in \cite{acchro1} and \cite{MaRa}, in where were also studied palindromic and antipalindromic polynomials. The following results are were obtained as application of previous theorems and Proposition \ref{proplint}.
\end{remark}
\begin{theorem}[Reversing Jordan Form]\label{jordant} Assume $V=\mathbb K^n$, $\mathcal B_p$ and $\mathcal B_a$ canonical basis of palindromic and antipalindromic vectors of $V$. Let $\mathfrak{M}_p$ and $\mathfrak{M}_a$ be matrices formed by the pasting by rows (resp. by columns) of palindromic and antipalindromic vectors by column (resp. by rows) respectively. Then, up to isomorphisms, 
 Jordan form and similarity matrix of $M_{\mathcal R}$ are  given by $$J_\mathcal R=
\mathcal P_b(I_{\lceil \frac{n}{2}\rceil},-I_{\lfloor\frac{n}{2}\rfloor} \textit{ and }P_\mathcal R=
\mathcal P_r(\mathfrak M_p,\mathfrak M_a) \textit{ respectively}.$$ Furthermore $P_\mathcal R$ is a  symmetric matrix.
\end{theorem}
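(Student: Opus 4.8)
The plan is to exploit the spectral information about $M_\mathcal{R}$ already collected in Proposition \ref{proplint}. First I would observe that the minimal polynomial $Q(\lambda)=\lambda^2-1=(\lambda-1)(\lambda+1)$ splits into distinct linear factors over $\mathbb{K}$ (which has characteristic zero), so $M_\mathcal{R}$ is diagonalizable and its Jordan form carries no nontrivial blocks. By the characteristic polynomial $P(\lambda)=(\lambda+1)^{\lfloor n/2\rfloor}(\lambda-1)^{\lceil n/2\rceil}$, the eigenvalue $+1$ occurs with multiplicity $\lceil n/2\rceil$ and the eigenvalue $-1$ with multiplicity $\lfloor n/2\rfloor$. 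Listing the $+1$'s before the $-1$'s then yields exactly $J_\mathcal{R}=\mathcal{P}_b(I_{\lceil n/2\rceil},-I_{\lfloor n/2\rfloor})$, and the freedom to reorder these blocks is precisely the role of the phrase ``up to isomorphisms''.

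Next I would identify the eigenspaces with the palindromic and antipalindromic subspaces. By Proposition \ref{proplint} and the subsequent remark, $\ker(\mathcal{R}-id)=W_p$ is the $+1$-eigenspace, $\ker(\mathcal{R}+id)=W_a$ is the $-1$-eigenspace, and $V=W_p\oplus W_a$. Hence taking as columns of $P_\mathcal{R}$ the canonical basis $\mathcal{B}_p$ of $W_p$ followed by the canonical basis $\mathcal{B}_a$ of $W_a$ produces an invertible matrix whose columns are eigenvectors; in the notation of the statement this is precisely $P_\mathcal{R}=\mathcal{P}_r(\mathfrak{M}_p,\mathfrak{M}_a)$, the horizontal pasting of the palindromic column-vectors with the antipalindromic ones. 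Since each column is an eigenvector with the matching eigenvalue, one has $M_\mathcal{R}P_\mathcal{R}=P_\mathcal{R}J_\mathcal{R}$, so that $P_\mathcal{R}^{-1}M_\mathcal{R}P_\mathcal{R}=J_\mathcal{R}$, establishing the first two assertions with essentially no computation.

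The genuine work is the final claim that $P_\mathcal{R}$ is symmetric, and this is where I expect the main obstacle. I would write the canonical bases explicitly as $p_k=e_k+e_{n+1-k}$ and $a_k=e_k-e_{n+1-k}$, noting that $p_k$ and $a_k$ are both supported on the single coordinate pair $\{k,n+1-k\}$ fixed by the reversing permutation. The delicate point is that the antipalindromic basis must be arranged so that $a_k$ occupies the column mirrored, under $j\mapsto n+1-j$, to the column carrying $p_k$; with this mirror pairing a short case analysis, according to whether each of the indices $i,j$ lies in the palindromic block $j\le\lceil n/2\rceil$ or the antipalindromic block $j>\lceil n/2\rceil$, shows $(P_\mathcal{R})_{ij}=(P_\mathcal{R})_{ji}$ in every case (the middle coordinate, present when $n$ is odd, contributing a symmetric diagonal entry). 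Conceptually the symmetry is forced because $M_\mathcal{R}=(\delta_{i,n-j+1})$ is itself a symmetric matrix, so its eigenvectors, built from the mirror-coordinate combinations $e_k\pm e_{n+1-k}$, assemble into a symmetric eigenmatrix; the bookkeeping step that needs care is verifying that the ordering implicit in $\mathfrak{M}_a$ indeed realizes this mirror pairing, since the naive same-order listing of $\mathcal{B}_a$ would break the symmetry.
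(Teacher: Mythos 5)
Your proposal follows essentially the same route as the paper: both derive $J_{\mathcal R}$ from the spectral data of Proposition \ref{proplint} (minimal polynomial $\lambda^2-1$ giving diagonalizability, characteristic polynomial giving the multiplicities $\lceil n/2\rceil$ and $\lfloor n/2\rfloor$) and assemble $P_{\mathcal R}$ by pasting the palindromic and antipalindromic eigenbasis vectors. The only divergence is that the paper asserts the symmetry of $P_{\mathcal R}$ in one line without argument, whereas you correctly flag that it depends on arranging the antipalindromic columns in mirror order (for $n=4$ the naive listing $p_1,p_2,a_1,a_2$ of the columns $e_k\pm e_{n+1-k}$ yields a non-symmetric matrix, while $p_1,p_2,a_2,a_1$ yields a symmetric one), so your treatment of that final claim is in fact more careful than the paper's.
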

\begin{proof} Assume $v\in V$ as column vectors. In virtue of Proposition \ref{proplint} we obtain $J_\mathcal R$. Now, due to the eigenvalues $1$ and $-1$ correspond to palindromic and antipalindromic eigenvectors respectively, we can choose those belonging to $\mathcal B_p$ and $\mathcal B_a$ respectively, that is $v_{p_i}\in \mathcal B_p$ and $v_{a_i}\in \mathcal B_a$. Thus $\mathfrak{M}_p=\mathcal P_r(v_{p_1},\ldots,v_{p_{\lceil\frac{n}{2}\rceil}})$ and $\mathfrak{M}_p=\mathcal P_r(v_{a_1},\ldots,v_{a_{\lfloor\frac{n}{2}\rfloor}})$. Finally, $P_\mathcal R$ is obtained as $P_\mathcal R=\mathcal P(\mathfrak{M}_p,\mathfrak{M}_a)$, which is symmetric. Under assumption of $v\in V$ as row vectors, the proof is similar. 
\end{proof}
\begin{theorem}\label{t2} Suppose that the $n\times n$ matrix $A$ admits a Jordan form. The following statements hold.
\begin{enumerate}
\item Jordan form is preserved under Reversing, where similarity matrix associated to Reversing of $A$ is Reversing by columns of similarity matrix associated to $A$. 
\item If the Jordan form is palindromic (resp. antipalindromic) and the similarity matrix associated to $A$ is palindromic (resp. antipalindromic), then $A$ is palindromic (resp. antipalindromic).
\end{enumerate}
\end{theorem}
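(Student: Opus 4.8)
The plan is to reduce everything to the multiplicativity of Reversing applied to the Jordan factorization $A = P J_A P^{-1}$, where $J_A$ is the Jordan form of $A$ and $P$ the associated similarity matrix. The tools I will use are all already available: item $3$ of Proposition \ref{proplint2}, which for a square matrix reads $\mathcal{R}(A) = M_{\mathcal{R}}\,A\,M_{\mathcal{R}}$ (recall $M_{\mathcal{R}_c}=M_{\mathcal{R}_r}=M_{\mathcal{R}}$ in the square case), together with $M_{\mathcal{R}}^{2}=I_n$ coming from $\mathcal{R}^2=\mathrm{id}$ in Proposition \ref{proplint}; and, when it is more convenient, properties $97$, $100$ and $101$, namely $\mathcal{R}(I_n)=I_n$, $\mathcal{R}(AB)=\mathcal{R}(A)\mathcal{R}(B)$ and $(\mathcal{R}(A))^{-1}=\mathcal{R}(A^{-1})$. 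I will also use that $\mathcal{R}_c$ is left multiplication by $M_{\mathcal{R}_c}$ (item $2$ of Proposition \ref{proplint2}), so that $\mathcal{R}_c(P)=M_{\mathcal{R}}P$ for square $P$.

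For item $1$ I would substitute the Jordan factorization directly into $\mathcal{R}(A)=M_{\mathcal{R}}\,A\,M_{\mathcal{R}}$ and regroup the anti-identity factors with $P$ and $P^{-1}$:
\begin{equation*}
\mathcal{R}(A)=M_{\mathcal{R}}\,P J_A P^{-1}\,M_{\mathcal{R}}=(M_{\mathcal{R}}P)\,J_A\,(P^{-1}M_{\mathcal{R}})=(M_{\mathcal{R}}P)\,J_A\,(M_{\mathcal{R}}P)^{-1},
\end{equation*}
where the last step uses $M_{\mathcal{R}}^{-1}=M_{\mathcal{R}}$, hence $(M_{\mathcal{R}}P)^{-1}=P^{-1}M_{\mathcal{R}}$. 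Since $M_{\mathcal{R}}P=\mathcal{R}_c(P)$, this single computation exhibits $\mathcal{R}(A)$ as similar to the very same Jordan form $J_A$ through the similarity matrix $\mathcal{R}_c(P)$, which proves both assertions of item $1$ simultaneously. The point worth emphasizing is that \emph{no} reordering of Jordan blocks is required: because the entire product $P J_A P^{-1}$ is conjugated by $M_{\mathcal{R}}$, the central block $J_A$ is left untouched and only the similarity matrix changes.

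For item $2$ I would instead expand $\mathcal{R}(A)$ through multiplicativity, $\mathcal{R}(A)=\mathcal{R}(P)\,\mathcal{R}(J_A)\,\mathcal{R}(P^{-1})$, and replace $\mathcal{R}(P^{-1})$ by $(\mathcal{R}(P))^{-1}$ via property $101$. In the palindromic case the hypotheses $\mathcal{R}(J_A)=J_A$ and $\mathcal{R}(P)=P$ give $\mathcal{R}(A)=P J_A P^{-1}=A$, so $A$ is palindromic. In the antipalindromic case $\mathcal{R}(J_A)=-J_A$ and $\mathcal{R}(P)=-P$ force $(\mathcal{R}(P))^{-1}=-P^{-1}$, and the three sign changes collapse to one:
\begin{equation*}
\mathcal{R}(A)=(-P)(-J_A)(-P^{-1})=-P J_A P^{-1}=-A,
\end{equation*}
so $A$ is antipalindromic.

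I expect the genuinely delicate points to be purely bookkeeping rather than conceptual: in item $2$, verifying $(\mathcal{R}(P))^{-1}=-P^{-1}$ for antipalindromic $P$ and confirming that the three minus signs combine to a single one; and in item $1$, being careful that the resulting similarity matrix is $\mathcal{R}_c(P)=M_{\mathcal{R}}P$ and not $\mathcal{R}(P)=M_{\mathcal{R}}PM_{\mathcal{R}}$. Everything else is a formal manipulation riding on the two facts $M_{\mathcal{R}}^{2}=I_n$ and the multiplicativity of $\mathcal{R}$, so no additional machinery beyond the properties already established should be needed.
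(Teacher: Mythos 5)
Your proposal is correct and follows essentially the same route as the paper: item 1 by conjugating the factorization $A=PJP^{-1}$ with $M_{\mathcal{R}}$ and absorbing the anti-identity factors into $\mathcal{R}_c(P)$ and its inverse, and item 2 by the multiplicativity $\mathcal{R}(PJP^{-1})=\mathcal{R}(P)\mathcal{R}(J)(\mathcal{R}(P))^{-1}$ together with the sign bookkeeping. Your version is in fact slightly more careful than the paper's in spelling out why $(M_{\mathcal{R}}P)^{-1}=P^{-1}M_{\mathcal{R}}$ and why the similarity matrix is $\mathcal{R}_c(P)$ rather than $\mathcal{R}(P)$.
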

\begin{proof} Due to $A$ admits a Jordan form, there exists a similarity matrix $P$,  i.e.,  $A=PJP^{-1}$. Now, we proceed according to each item.
\begin{enumerate}
\item By item 3 of Proposition \ref{proplint2} we have that $\mathcal R (A)=M_{\mathcal R_c}AM_{\mathcal R_r}$, that is $\mathcal R (A)=M_{\mathcal R_c}PJP^{-1}M_{\mathcal R_r}={\mathcal R_c}(P)J{\mathcal R_r(P^{-1})}$. Now, by items 98 to 101 we get $\mathcal R (A)=QJQ^{-1}$, where $Q=\mathcal R_c(P)$.
\item By hypothesis, item 100 and item 101 we have that $\mathcal R(A)=\mathcal R(PJP^{-1})=\mathcal R(P)\mathcal R(J)(\mathcal R (P))^{-1} =PJP^{-1}=A$. Similarly, assuming $\mathcal R(J)=-J$ and $\mathcal R(P)=-P$, we have that $R(A)=\mathcal R(PJP^{-1})=\mathcal R(P)\mathcal R(J)(\mathcal R (P))^{-1} =-PJP^{-1}=-A$.
\end{enumerate}
\end{proof}
\begin{remark} In general, Reversing of a Jordan form is not a Jordan form. Moreover, the converse of item 2 in general is not true.
\end{remark}
To illustrate the previous remark, we present the following example.
\begin{example}
	Consider $A=\begin{pmatrix}1&2&1\\0&-1&0\\-1&1&3\end{pmatrix}$, its Jordan matrix is $J=\begin{pmatrix}-1&0&0\\0&2&1\\0&0&2\end{pmatrix}$. Reversing of $J$ is given by $\mathcal R(J)=\begin{pmatrix}2&0&0\\1&2&0\\0&0&-1\end{pmatrix},$ which is not a Jordan matrix.
\end{example}

\begin{theorem} Consider $f\in \mathbb{R}[x]$ and the $n\times n$ matrices $A$ and $\mathcal{R}_r(A)$  with eigenvalues $\lambda_i$ and $\widetilde{\lambda}_i$ respectively, where $1\leq i\leq m\leq n$. The following statements hold.

\begin{enumerate}
\item $f(\lambda_i)$ is eigenvalue of $f(\mathcal R(A))$
\item $f(\widetilde{\lambda_i})$ is eigenvalue of $f(\mathcal R_c(A))$
\end{enumerate}
\end{theorem}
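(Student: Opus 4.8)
The plan is to reduce both statements to two facts that are already available: the equalities of characteristic polynomials established in Theorem \ref{t1}, together with the elementary spectral mapping property for polynomials. First I would record that spectral mapping observation explicitly. If $\mu$ is an eigenvalue of an $n\times n$ matrix $M$ with eigenvector $u$, then $M^k u=\mu^k u$ for every $k\geq 0$, so for any $f\in\mathbb{R}[x]$ we get $f(M)u=f(\mu)u$; hence $f(\mu)$ is an eigenvalue of $f(M)$, realized by the same eigenvector $u$. This is the only genuinely ``functional'' ingredient, and since $f$ is a polynomial it is completely routine.

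For item 1, the point is that $A$ and $\mathcal{R}(A)$ share their eigenvalues. This is exactly item 2 of Theorem \ref{t1}, which asserts $p=q$, i.e.\ the characteristic polynomials of $A$ and $\mathcal{R}(A)$ coincide. Consequently each $\lambda_i$, being an eigenvalue of $A$, is also an eigenvalue of $\mathcal{R}(A)$. Applying the spectral mapping property with $M=\mathcal{R}(A)$ and $\mu=\lambda_i$ then gives that $f(\lambda_i)$ is an eigenvalue of $f(\mathcal{R}(A))$, as claimed.

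For item 2, I would argue in the same way, now invoking item 3 of Theorem \ref{t1}, which states $r=s$: the characteristic polynomials of $\mathcal{R}_c(A)$ and $\mathcal{R}_r(A)$ coincide. Since by hypothesis the $\widetilde{\lambda}_i$ are the eigenvalues of $\mathcal{R}_r(A)$, the equality $r=s$ forces each $\widetilde{\lambda}_i$ to be an eigenvalue of $\mathcal{R}_c(A)$ as well, and applying the spectral mapping property to $M=\mathcal{R}_c(A)$, $\mu=\widetilde{\lambda}_i$ yields that $f(\widetilde{\lambda}_i)$ is an eigenvalue of $f(\mathcal{R}_c(A))$.

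There is essentially no hard step here, since the substantive work has been front-loaded into Theorem \ref{t1}; the main obstacle, if any, is purely one of bookkeeping. The only point deserving care is to phrase the spectral mapping fact for the \emph{polynomial} $f$ (which is all that $f\in\mathbb{R}[x]$ requires), and to note that the statement only claims each $f(\lambda_i)$ (resp.\ $f(\widetilde{\lambda}_i)$) \emph{is} an eigenvalue of the corresponding transformed matrix, so there is no need to establish that these exhaust the spectrum or to track multiplicities.
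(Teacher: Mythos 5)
Your proposal is correct and follows essentially the same route as the paper: both invoke the polynomial spectral mapping fact ($f(\mu)$ is an eigenvalue of $f(M)$ whenever $\mu$ is an eigenvalue of $M$) and then use items 2 and 3 of Theorem \ref{t1} to transfer the eigenvalues from $A$ to $\mathcal{R}(A)$ and from $\mathcal{R}_r(A)$ to $\mathcal{R}_c(A)$. Your write-up is in fact slightly more explicit than the paper's, which simply cites ``basic linear algebra'' for the spectral mapping step.
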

\begin{proof} For basic linear algebra we know that $f(\lambda_i)$, $1\leq i\leq m\leq n$, are the eigenvalues of $f(A)$. Now we proceed according to each item.
\begin{enumerate}
\item  By Theorem \ref{t1} we have that eigenvalues of $A$ are the same of $\mathcal R(A)$. Thus, $f(\lambda_i)$ is eigenvalue of $f(\mathcal R(A))$ for $1\leq i\leq m\leq n$.
\item By Theorem \ref{t1} we have that eigenvalues of $\mathcal R_rA$ are the same of $\mathcal R_c(A)$. Thus, $f(\widetilde{\lambda_i})$ is eigenvalue of $f(\mathcal R_c(A))$ for $1\leq i\leq m\leq n$.
\end{enumerate}
The proof is done.
\end{proof}
\begin{theorem} Assume $f: \mathfrak A\subset \mathbb R\rightarrow \mathfrak B\subset\mathbb{R}$ is analytic and let $A$ be an $n\times n$ matrix. The following statements hold.

\begin{enumerate}
\item $f(\mathcal R_r(A))=\mathcal R(f(\mathcal R_c(A)))$
\item $f(\mathcal R_c(A))=\mathcal R(f(\mathcal R_r(A)))$
\item $f(\mathcal R(A))=\mathcal R(f (A))$
\end{enumerate}
\end{theorem}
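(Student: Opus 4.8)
The plan is to reduce all three identities to statement (3), and to prove (3) from the single structural fact that Reversing is conjugation by an involution. Recall from item $3$ of Proposition \ref{proplint2} that for a square matrix $\mathcal R(A)=M_{\mathcal R}AM_{\mathcal R}$, while property $97$ gives $\mathcal R(I_n)=M_{\mathcal R}I_nM_{\mathcal R}=I_n$, so that $M_{\mathcal R}^2=I_n$ and $M_{\mathcal R}^{-1}=M_{\mathcal R}$. Thus $\mathcal R$ is exactly the similarity $A\mapsto M_{\mathcal R}AM_{\mathcal R}^{-1}$. Since $f$ is analytic I would define $f(A)$ through its convergent Taylor series $f(A)=\sum_{k\ge 0}c_kA^k$ (equivalently via the spectral/Jordan definition of a matrix function), the partial sums being polynomials $p_N(A)$. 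Because $\mathcal R$ is linear (property $87$), multiplicative (property $100$) and unital (property $97$), it is an algebra automorphism, so $\mathcal R(p_N(A))=p_N(\mathcal R(A))$ for every $N$; being linear on a finite-dimensional space it is continuous, and passing to the limit yields $\mathcal R(f(A))=f(\mathcal R(A))$, which is statement (3). Equivalently, one may invoke directly the standard invariance of analytic matrix functions under similarity, $f(M_{\mathcal R}AM_{\mathcal R}^{-1})=M_{\mathcal R}f(A)M_{\mathcal R}^{-1}$.

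Next I would deduce statements (1) and (2) from (3) by substitution. The key observations are the composition rules $\mathcal R=\mathcal R_c\circ\mathcal R_r=\mathcal R_r\circ\mathcal R_c$ (properties $98$ and $99$) together with the involutivity $\mathcal R_r^2=\mathcal R_c^2=\mathrm{id}$ (properties $21$ and $22$). Writing $\mathcal R=\mathcal R_r\circ\mathcal R_c$ and evaluating at $\mathcal R_c(A)$ gives $\mathcal R(\mathcal R_c(A))=\mathcal R_r(\mathcal R_c(\mathcal R_c(A)))=\mathcal R_r(A)$, and symmetrically $\mathcal R(\mathcal R_r(A))=\mathcal R_c(A)$. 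Applying statement (3) to the matrix $\mathcal R_c(A)$ in place of $A$ then gives $f(\mathcal R(\mathcal R_c(A)))=\mathcal R(f(\mathcal R_c(A)))$, that is $f(\mathcal R_r(A))=\mathcal R(f(\mathcal R_c(A)))$, which is statement (1); applying it to $\mathcal R_r(A)$ produces statement (2) in the same way.

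The only genuinely delicate point is the legitimacy of the functional calculus and the matching of domains, not any computation. One must check that $f$ is actually defined on the spectra of the matrices involved. Here statement (2) of Theorem \ref{t1} ensures that $\mathcal R(A)$ and $A$ share their characteristic polynomial, so $f(\mathcal R(A))$ is defined exactly when $f(A)$ is, making statement (3) well posed; likewise statement (3) of Theorem \ref{t1} ($r=s$) shows that $\mathcal R_c(A)$ and $\mathcal R_r(A)$ have equal characteristic polynomials (indeed they are similar via $M_{\mathcal R}$), so the two sides of statements (1) and (2) are simultaneously defined. If $f$ is only real-analytic on a proper subset $\mathfrak A\subset\mathbb R$ rather than entire, I would interpret $f(M)$ through the Cauchy integral or Hermite interpolation on the spectrum; invariance under similarity, and hence under the algebra automorphism $\mathcal R$, persists in either formulation, so the argument is unaffected. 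This domain bookkeeping is the main thing to handle with care.
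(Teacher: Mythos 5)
Your proposal is correct, and it reorganizes the argument in a way that differs from the paper's proof even though both rest on the same two facts, namely $M_{\mathcal R}^2=I_n$ and the power-series expansion of $f$. The paper proves each item by a direct telescoping computation on powers: for item (1) it writes $(\mathcal R_r(A))^k=(AM_{\mathcal R})^k$, inserts $M_{\mathcal R}M_{\mathcal R}=I_n$ at the front to obtain $M_{\mathcal R}(M_{\mathcal R}A)^kM_{\mathcal R}=\mathcal R\bigl((\mathcal R_c(A))^k\bigr)$, does the symmetric computation for item (2), and gets item (3) from property $100$ by induction; summing the series termwise then gives all three identities. You instead prove item (3) first, viewing $\mathcal R$ as a unital algebra automorphism (conjugation by the involution $M_{\mathcal R}$) that therefore commutes with polynomials and, by continuity, with analytic functional calculus, and you then deduce items (1) and (2) by substituting $\mathcal R_c(A)$ and $\mathcal R_r(A)$ into (3) and using the composition rules $\mathcal R=\mathcal R_r\circ\mathcal R_c=\mathcal R_c\circ\mathcal R_r$ together with $\mathcal R_r^2=\mathcal R_c^2=\mathrm{id}$; the identities $\mathcal R(\mathcal R_c(A))=\mathcal R_r(A)$ and $\mathcal R(\mathcal R_r(A))=\mathcal R_c(A)$ are exactly right. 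Your route buys a single clean structural statement (item (3)) from which the other two follow formally, and it makes explicit two points the paper leaves implicit: the passage from partial sums to the limit uses continuity of the linear map $\mathcal R$ on a finite-dimensional space, and the two sides of each identity are simultaneously well defined because, by Theorem \ref{t1}, the matrices $A$, $\mathcal R(A)$, $\mathcal R_r(A)$, $\mathcal R_c(A)$ have pairwise related spectra. The paper's route buys a self-contained, elementary verification of each item without appealing to the automorphism formalism. Both are valid.
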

\begin{proof} By items 21 and 22, we see that $M_\mathcal{R}^2=I_n$ and due to $f$ is analytic, we have that \begin{displaymath}
 f(A)=\sum_{k=0}^\infty a_kA^k.
\end{displaymath} Now, we proceed according to each item.

\begin{enumerate}
\item We see that $(\mathcal R_r(A))^k=(AM_{\mathcal{R}})^k=M_{\mathcal{R}}M_{\mathcal{R}}AM_{\mathcal{R}}AM_{\mathcal{R}}\cdots AM_{\mathcal{R}}AM_{\mathcal{R}}=M_{\mathcal{R}}BM_{\mathcal{R}}=\mathcal{R}(B)$, where $B=M_{\mathcal{R}}AM_{\mathcal{R}}A\cdots M_{\mathcal{R}}AM_{\mathcal{R}}A=(\mathcal R_c(A))^k$, therefore $f(\mathcal R_r(A))=\mathcal R(f(\mathcal R_c(A)))$.
\item We see that $(\mathcal R_c(A))^k=(M_{\mathcal{R}}A)^k=M_{\mathcal{R}}AM_{\mathcal{R}}A\cdots M_{\mathcal{R}}AM_{\mathcal{R}}AM_{\mathcal{R}}M_{\mathcal{R}}=M_{\mathcal{R}}BM_{\mathcal{R}}=\mathcal{R}(B)$, where $B=AM_{\mathcal{R}}AM_{\mathcal{R}}\cdots AM_{\mathcal{R}}AM_{\mathcal{R}}=(\mathcal R_r(A))^k$, therefore $f(\mathcal R_c(A))=\mathcal R(f(\mathcal R_r(A)))$.
\item By item 100, taking $A=B$, and proceeding inductively we obtain $(\mathcal R(A))^k=\mathcal R(A^k)$, therefore $f(\mathcal R(A))=\mathcal R(f (A))$.
\end{enumerate}
\end{proof}

\section*{Final Remarks}
In this paper we extend some results presented in \cite{acarnu,acchro2}, following their same philosophy, relating Pasting and Reversing with vector spaces and matrix theory. We presented new results involving Pasting and Reversing over matrices, including properties of Jordan forms, eigenvalues, eigenvectors, analytic functions the introduction of transformations to obtain palindromic and antipalindromic vectors and matrices from any vector or any matrix. Nevertheless, there are a plenty of questions in matrix theory related with Pasting and Reversing that cannot be considered here, we wish that this paper can motivate to the readers to develop such questions.\\

Currently, there are some research projects involving Pasting and Reversing over other mathematical structures, some of them by author different to the author of this paper. Some of such projects include applications to orthogonal polynomials, differential equations, difference equations, quantum mechanics, topology, group theory, algebraic varieties, Ore rings, combinatorial dynamics, numerical analysis, graph theory, coding theory, statistics, among others.

\section*{Acknowledgements}
The publication of this paper is totally supported by Vicerrector\'{\i}a de Investigaciones - Universidad Sim\'on Bol\'{\i}var (Barranquilla - Colombia). The authors thank to Paola Amar by the sponsoring of this research, as well to David Bl\'azquez-Sanz and \'Angela Mariette Rodr\'{\i}guez by their useful comments and suggestions on this work. The first author acknowledges to Ecos Nord by the stays of research in where this work was improved. The second author acknowledges to Colciencias by the grant J\'ovenes Investigadores, which allow her to study the master program in mathematics at Universidad del Norte under the supervision of the first author. Finally, we thank to Greisy Morillo by their hospitality and support during the final part of this work.


\begin{thebibliography}{18}
\newcommand{\ar}[1]{``#1''}
\newcommand{\lb}[1]{{\sl``#1''\/}}
\newcommand{\co}[1]{{\sl#1\/}}
\newcommand{\rv}[1]{{\it#1\/}}
\newcommand{\vl}[1]{{\bf#1\/}}
\newcommand{\vol}[1]{vol.~\vl{#1}}
\newcommand{\nnu}[1]{\mbox{n.~#1}}
\newcommand{\pg}[1]{\mbox{#1}}


\bibitem{Ac1}P.B. Acosta Hum\'anez,{\it Genealogy of simple permutations with order a power of two} (Spanish). Revista Colombiana de Matem\'aticas, {\bf 42}, (2008) 1--14.

\bibitem{Ac2} P.B. Acosta-Hum\'anez,  {\it Pasting operation and the square of natural numbers} (Spanish),
Civilizar, {\bf 4}, (2003) 85--97.

\bibitem{acarnu} P.B. Acosta-Hum\'anez, M. Aranda, R. N\'unez, {\it Some Remarks on a Generalized Vector Product}. Integraci\'on, \textbf{29}, (2011) 151--162

\bibitem{acchro1} P. Acosta-Hum\'anez, A. Chuquen \& A. Rodr\'{\i}guez, \emph{Pasting and Reversing operations over some rings}, Bolet\'{\i}n de Matem\'aticas, \textbf{17}, (2010) 143--164

\bibitem{acchro2} P. Acosta-Hum\'anez, A. Chuquen, \emph{Pasting and Reversing operations over some vector spaces}, Bolet\'{\i}n de Matem\'aticas,  \textbf{20}, (2013), 145--161

\bibitem{AM}P.B. Acosta-Hum\'anez, O.E. Mart\'inez, {\it Simple permutations with order $4n + 2$}. Preprint arXiv:1012.2076v1.

\bibitem{AM1}P.B. Acosta-Hum\'anez, O.E. Mart\'inez, {\it Simple permutations with order $4n + 2$ by means of Pasting and Reversing}, Qualitative Theory of Dynamical Systems, \textbf{14}, (2015), 1--30

\bibitem{AM2}P.B. Acosta-Hum\'anez, O.E. Mart\'inez, {\it Simple permutations with order $4n + 2$. Part I}. Preprint arXiv:1012.2076v2.

\bibitem{acmoro} P.B. Acosta-Hum\'anez, P. Molano \& A. M. Rodr\'{\i}guez, \emph{Some Remarks on Pasting and Reversing in Natural Numbers} (Spanish), Matua, \textbf{2}, (2015), 65--90.

\bibitem{Br} R. Brualdi \& H. Ryser, {\it Combinatorial Matrix Theory}, Cambridge University Press, 1991.

\bibitem{Ev} H. Eves, {\it Elementary Matrix Theory}, Dover, 1980.

\bibitem{Ga} F. Gantmacher, {\it The Theory of Matrices}, American Mathematical Society, Vol. 1., 2000.

\bibitem{Ge} P. Gerdes, {\it Adventures in the World of matrices}, Nova Science Publishers, 2007.

\bibitem{hall} M. Hall, {\it Combinatorial Theory}, Wiley \& Sons, Second Edition, New York, 1983.

\bibitem{lantis} P. Lancaster \& M. Tismenetsky {\it The Theory of Matrices}, Computer science and applied mathematics, Academic Press, Second Edition, 1985.

\bibitem{Le} D. Lewis, {\it Matrix Theory}, World Scientific, 1991.

\bibitem{MaRa} I. Markovsky \& S. Rao, \emph{Palindromic polynomials, time-reversible systems, and conserved quantities}, in ``Proceedings 16th Mediterranean Conference on Control and Automation, Congress Centre, Ajaccio, France, June 25--27, 2008, IEEE (2008), 125--130.

\bibitem{We} J. Wedderburn, {\it Lectures on Matrices}, Colloquium Publications, 1980.

\end{thebibliography}
\end{document}